\theoremstyle{plain}
	\newtheorem{thm}{Theorem}[section]
	\newtheorem{lem}[thm]{Lemma}
	\newtheorem{prop}[thm]{Proposition}
	\newtheorem{cor}[thm]{Corollary}
	\newtheorem{ques}[thm]{Question}
\theoremstyle{definition}
	\newtheorem{dfn}[thm]{Definition}
\theoremstyle{remark}
	\newtheorem{rem}[thm]{Remark}
\newcommand{\fig}[3][width=12cm]{
\begin{figure}[htbp]
	\centering 
	\includegraphics[#1,clip]{#2} 
	\caption{#3} 
\label{fig:#2}
\end{figure}}
\DeclareMathOperator{\dev}{dev}
\DeclareMathOperator{\Fix}{Fix}
\DeclareMathOperator{\hol}{hol}
\DeclareMathOperator{\Hom}{Hom}
\DeclareMathOperator{\Homeo}{Homeo}
\DeclareMathOperator{\Imm}{Imm}
\DeclareMathOperator{\inter}{int}
\DeclareMathOperator{\Isom}{Isom}
\DeclareMathOperator{\Mon}{Mon}
\DeclareMathOperator{\Out}{Out}
\DeclareMathOperator{\Stab}{Stab}
\DeclareMathOperator{\tr}{tr}
\DeclareMathOperator{\Tw}{Tw}
\DeclareMathOperator{\vol}{vol}
\newcommand{\bbZ}{\mathbb{Z}}
\newcommand{\bbR}{\mathbb{R}}
\newcommand{\bbC}{\mathbb{C}}
\newcommand{\bbH}{\mathbb{H}}
\newcommand{\calHC}{\mathcal{HC}}
\newcommand{\calC}{\mathcal{C}}
\newcommand{\calX}{\mathcal{X}}
\begin{document}

\title{Holed cone structures on 3-manifolds}
\author{Ken'ichi YOSHIDA}
\address{International Institute for Sustainability with Knotted Chiral Meta Matter (WPI-SKCM$^2$), Hiroshima University, 1-3-1 Kagamiyama, Higashi-Hiroshima, Hiroshima 739-8526, Japan}
\email{kncysd@hiroshima-u.ac.jp}

\subjclass[2020]{57M50 (Primary), 57K31, 57K32 (Secondary)}
\keywords{Cone-manifolds, Holonomy representations, Character varieties, Volumes of representations}
\date{}

\begin{abstract}
We introduce holed cone structures on 3-manifolds 
to generalize cone structures. 
In the same way as a cone structure, 
a holed cone structure induces the holonomy representation. 
We consider the deformation space consisting of the holed cone structures on a 3-manifold 
whose holonomy representations are irreducible. 
This deformation space for positive cone angles 
is a covering space on a reasonable subspace of the character variety. 
\end{abstract}

\maketitle

\section{Introduction}
\label{section:intro}

A cone-manifold is a generalization of a Riemannian manifold of constant sectional curvature, 
allowed to have cone singularity. 
The rotational angle around cone singularity is not equal to $2\pi$, 
and it is called the cone angle. 
While a finite volume complete hyperbolic structure on a 3-manifold admits no continuous deformation 
by the Mostow rigidity, 
it can often be deformed via hyperbolic cone-manifolds. 
For example, a hyperbolic Dehn surgery (in the strong sense) 
gives continuous deformation from a cusped hyperbolic 3-manifold to a hyperbolic 3-manifold obtained by gluing solid tori 
via hyperbolic cone-manifolds whose singular locus consists of the cores of glued solid tori. 
If the cone angles of a cone-manifold are equal to $2\pi/n_{i}$ for $n_{i} \in \bbZ_{> 0}$, 
this cone-manifold can be regarded as an orbifold. 
Proofs of the geometrization of 3-orbifolds in \cite{BLP05, CHK00} are based on this fact.

Local and global rigidities of hyperbolic cone-manifolds are known as follows. 
For an oriented 3-manifold $X$ and an $n$-component link $\Sigma$ in $X$, 
let $\calC_{[0,\theta]} (X,\Sigma)$ denote 
the space of hyperbolic cone structures on $(X,\Sigma)$ with cone angles at most $\theta$. 
Let $\Theta \colon \calC_{[0,\theta]} (X,\Sigma) \to [0,\theta]^{n}$ denote 
the map assigning the cone angles. 
The local rigidity by Hodgson and Kerckhoff~\cite{HK98} 
states that $\Theta \colon \calC_{[0,2\pi]} (X,\Sigma) \to [0,2\pi]^{n}$ 
is a local homeomorphism. 
The global rigidity by Kojima~\cite{Kojima98} 
states that $\Theta \colon \calC_{[0,\pi]} (X,\Sigma) \to [0,\pi]^{n}$ 
is injective. 
The global rigidity is not known if some cone angles exceed $\pi$. 
Izmestiev~\cite{Izmestiev11} gave examples 
where the global rigidity does not hold and the cone angles exceed $2\pi$. 
The proof in \cite{Kojima98} is based on the fact 
that two cone loci with cone angles less than $\pi$ are not close. 
A cone structure in $\calC_{[0,\pi]} (X,\Sigma)$ 
can be continuously deformed to the cusped hyperbolic structure on $X \setminus \Sigma$. 
However, this argument does not work if cone angles exceed $\pi$. 
Cone structures may degenerate by meeting cone loci 
even if the cone angles decrease \cite{Yoshida22}.

In this paper, 
we introduce the notion of holed cone structures 
to generalize cone structures. 
A holed cone structure on $(X,\Sigma)$ is defined as 
an equivalence class of cone metrics outside some balls in $X$. 
In the same way as a non-holed one, a holed cone structure induces 
the holonomy representation of $\pi_{1} (X \setminus \Sigma)$ to $\Isom^{+} (\bbH^{3})$ 
up to conjugation. 
We consider the deformation space $\calHC^{\mathrm{irr}} (X,\Sigma)$ 
consisting of the holed cone structures on $(X,\Sigma)$ 
whose holonomy representations are irreducible. 
In Theorem~\ref{thm:hausdorff}, we will show that 
$\calHC^{\mathrm{irr}} (X,\Sigma)$ is Hausdorff. 
We will define a subspace $\calX^{\mathrm{cone}}(X,\Sigma)$ 
of the character variety $\calX (\pi_{1} (X \setminus \Sigma))$. 
The $\widehat{\calX}^{\mathrm{cone}}(X,\Sigma)$ consists of 
the pairs of elements in $\calX^{\mathrm{cone}}(X,\Sigma)$ and compatible cone angles. 
The map 
$\widehat{\hol} \colon \calHC^{\mathrm{irr}}(X,\Sigma) \to \widehat{\calX}^{\mathrm{cone}}(X,\Sigma)$ 
assigns the holonomy representation and the cone angles. 
In Theorem~\ref{thm:cover}, we will show that 
the map $\widehat{\hol} \colon \calHC^{\mathrm{irr}}_{+}(X,\Sigma) \to \widehat{\calX}^{\mathrm{cone}}_{+}(X,\Sigma)$ 
is a regular covering map to each path-connected component of 
$\widehat{\calX}^{\mathrm{cone}}_{+}(X,\Sigma)$ 
that contains an image, 
where the map is restricted to the elements with positive cone angles. 
Lemma~\ref{lem:subgroup} implies that 
the map $\widehat{\hol} \colon \calHC^{\mathrm{irr}}(X,\Sigma) \to \widehat{\calX}^{\mathrm{cone}}(X,\Sigma)$ 
is not injective. 
Consequently, global rigidity for holed cone structures does not hold 
even if $X \setminus \Sigma$ admits a hyperbolic structure.

Section~\ref{section:cone} concerns 
(non-holed) cone structures in the space of holed cone structures. 
In Theorem~\ref{thm:conehol}, we will show that 
cone metrics $g$ and $g^{\prime}$ with $\widehat{\hol} (g) = \widehat{\hol} (g^{\prime})$ 
are equivalent. 
By Corollary~\ref{cor:coneeq}, 
we may regard a cone structure as a holed cone structure. 
We expect that the notion of holed cone structures is useful 
to consider global rigidity for cone-manifolds. 
It should be worthwhile to ask 
whether a cone structure can be deformed to the cusped hyperbolic structure 
via holed cone structures.

In Section~\ref{section:vol}, 
we will introduce the volume of a holed cone structure. 
This is defined as 
the sum of the volume of a holed cone metric and the volume enclosed by the holes. 
After all, 
this volume is equal to the volume of the holonomy representation 
by Theorem~\ref{thm:vol}.

In Section~\ref{section:ex}, 
we will give an explicit example of holed cone structures. 
This is an extension of the construction of cone structures by the author~\cite{Yoshida22}. 
This example illustrates how holed cone structures enable us 
to avoid degeneration with meeting cone loci.

Euclidean and spherical holed cone structures can be defined in the same manner. 
To show the corresponding results, 
it is necessary to take care of the topologies of quotients of representation spaces.

\section{Definition of holed cone structures}
\label{section:def}

Hyperbolic metrics on a manifold 
are Riemannian metrics with constant sectional curvature $-1$. 
Equivalently, 
they are modeled by the hyperbolic space $\bbH^{n}$ of constant sectional curvature $-1$. 
According to \cite{CHK00}, 
a (hyperbolic) cone-manifold 
is a topological manifold 
with a complete path metric (called a \emph{cone metric}) 
which can be triangulated 
into hyperbolic simplices. 
Although it does not matter in 3 dimensions, the link of each simplex in this triangulation 
is needed to be piecewise linearly homeomorphic to a standard sphere.

The \emph{singular locus} of a cone-manifold 
consists of the points with no neighborhood isometric to a hyperbolic ball. 
We consider 3-dimensional cone-manifolds 
whose singular locus consists of simple closed geodesics. 
Then locally a cone metric has the form 
\[
dr^{2} + \sinh^{2} r d \theta^{2} + \cosh^{2} r dz^{2} 
\]
in cylindrical coordinates around an axis, 
where $r$ is the distance from the axis, 
$z$ is the distance along the axis, 
and $\theta$ is the angle measured modulo the \emph{cone angle} $\theta_{0} > 0$. 
If a cone angle is equal to $2\pi$, then the metric is smooth around the point.

Let $X$ be an oriented 3-manifold, 
and let $\Sigma$ be a union of disjoint circles in $X$. 
A cone metric on $(X,\Sigma)$ is a metric on $X$ 
such that $X$ is a cone-manifold with singular locus $\Sigma$. 
We allow that a cone angle is equal to $0$ or $2\pi$. 
By generalizing the notion, 
we say that a component $\Sigma_{i}$ of $\Sigma$ has cone angle zero 
if the metric around $\Sigma_{i}$ is a cusp neighborhood of $\Sigma_{i}$. 
In this case, the metric is defined outside $\Sigma_{i}$. 
We call a component of $\Sigma$ a \emph{cone locus}. 
A \emph{cone structure} on $(X,\Sigma)$ is an equivalence class 
of a cone metric on $(X,\Sigma)$, 
where two metrics are \emph{equivalent} 
if there is an isometry isotopic to the identity between them, 
and the intermediate maps in the isotopy preserve $\Sigma$ setwise.

We introduce holed cone structures on $(X,\Sigma)$ 
as a generalization of cone structures.

\begin{dfn}
Let $B$ be a union of finitely many (possibly zero) disjoint closed 3-balls 
in $X \setminus \Sigma$. 
A \emph{holed cone metric} on $(X,\Sigma)$ 
is a cone metric $g$ on $(X \setminus \inter (B), \Sigma)$ 
with smooth boundary $\partial B$. 
We call each component of $B$ a \emph{hole}. 
We call the metric space $(X \setminus \inter (B), \Sigma; g)$ a \emph{holed cone-manifold}. 
\end{dfn}

\begin{dfn}
\label{dfn:equivalence}
Let $g$ and $g^{\prime}$ be holed cone metrics on $(X,\Sigma)$ 
respectively with holes $B$ and $B^{\prime}$. 
The metrics $g$ and $g^{\prime}$ are \emph{equivalent} 
if there are holed cone metrics $g_{i}$ with holes $B_{i}$ on $(X,\Sigma)$ for $0 \leq i \leq n$ 
such that 
$g_{0} = g$, $g_{n} = g^{\prime}$, and for each $0 \leq i \leq n-1$ either 
\begin{enumerate}
\item there is a map $f \colon (X,\Sigma) \to (X,\Sigma)$ isotopic to the identity 
	(the intermediate maps in the isotopy preserve $\Sigma$ setwise) 
	such that the restriction of $f$ to $(X \setminus \inter (B_{i}), \Sigma; g_{i})$ 
	is an isometry onto $(X \setminus \inter (B_{i+1}), \Sigma; g_{i+1})$,  
\item $B_{i} \subset B_{i+1}$, and 
	$g_{i+1}$ is the restriction of $g_{i}$ to 
	$X \setminus \inter (B_{i+1})$, or 
\item $B_{i+1} \subset B_{i}$, and 
	$g_{i}$ is the restriction of $g_{i+1}$ to 
	$X \setminus \inter (B_{i})$. 
\end{enumerate}
We call an equivalence class $[g]$ a \emph{holed cone structure}. 
The relation (1) will not be mentioned explicitly. 
\end{dfn}

A cone metric is a holed cone metric by definition. 
Moreover, we can say that a cone structure is a holed cone structure. 
We will prove it in Corollary~\ref{cor:coneeq}.

If we remove an embedded ball with smooth boundary 
disjoint from the cone loci in a holed cone-manifold, 
then we obtain an equivalent holed cone metric. 
Conversely, 
if a lift of the boundary of a hole is embedded by the developing map 
(which we will define in Section~\ref{section:space}) 
and neighborhood of this boundary extends outward, 
then we can fill the hole by gluing the bounded ball in $\bbH^{3}$. 
In general, however, the boundary of a hole may not be embedded by the developing map.

\fig[width=12cm]{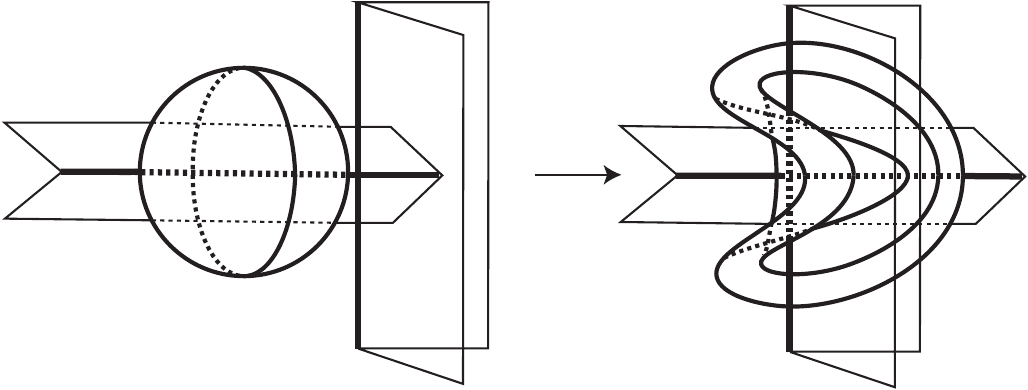}{A non-fillable hole between two cone loci}
 
Cone loci may prevent a hole from expanding to a fillable one. 
This enables us to deform holed cone structures 
when cone structures degenerate with meeting 
portions of the singular locus. 
The left of Figure~\ref{fig:hcs-intersect.pdf} indicates 
a portion of a fundamental domain for the universal covering map to $X \setminus \Sigma$. 
The vertical and horizontal lines are geodesics. 
Their neighborhoods are portions of a neighborhood of the singular locus. 
The hole is between the two lines. 
The right of Figure~\ref{fig:hcs-intersect.pdf} indicates the image by the developing map. 
The dotted lines indicate the part hidden by the hole. 
The images of the two lines intersect at a point. 
The boundary of the hole is immersed and not embedded by the developing map. 
The hole is not fillable, since otherwise the images of the lines would be disjoint. 
An explicit example will be given in Section~\ref{section:ex}.

\section{Deformation space of holed cone structures}
\label{section:space}

\subsection{Definitions}

Let $\Sigma$ be a link in an oriented 3-manifold $X$, 
and let $\Sigma_{1}, \dots, \Sigma_{n}$ denote the components of $\Sigma$. 
Let $g$ be a (hyperbolic) holed cone metric on $(X,\Sigma)$ with holes $B$. 
Let $\Gamma = \pi_{1}(X \setminus \Sigma) = \pi_{1}(X \setminus (\Sigma \cup \inter (B)))$. 
Suppose that $\Gamma$ is finitely generated. 
Note that the holes do not affect the fundamental group. 
Let $\widetilde{M}$ denote the universal cover of 
the incomplete hyperbolic 3-manifold $M = (X \setminus (\Sigma \cup \inter (B)); g)$. 
The \emph{developing map} 
$\dev_{g} \colon \widetilde{M} \to \bbH^{3}$ 
is defined in an ordinary way 
as in \cite{BLP05, CEG87, CHK00}. 

Let $G = \Isom^{+} (\bbH^{3})$ denote 
the group consisting of the orientation-preserving isometries of $\bbH^{3}$. 
The \emph{holonomy representation} 
$\rho_{g} \colon \Gamma \to G$ 
is also defined so that $\dev_{g}$ is equivariant, 
i.e. $\dev_{g}(\gamma \cdot x) = \rho_{g}(\gamma) \cdot \dev_{g}(x)$ 
for any $\gamma \in \Gamma$ and $x \in \widetilde{M}$. 
The representation $\rho_{g}$ is unique up to conjugation in $G$. 
Since the restriction and extension in Definition~\ref{dfn:equivalence} 
do not affect the holonomy representation, 
the holonomy representation $\rho_{g}$ is well-defined for a holed cone structure $[g]$. 
In other words, 
we have $\rho_{g} = \rho_{g^{\prime}}$ for equivalent holed cone metrics $g$ and $g^{\prime}$. 
The developing map is an immersion. 
Conversely, an equivariant immersion induces a metric. 
The restriction of the developing map to the boundary of a hole is an immersion, 
but it is not an embedding in general.

Consider the representation space $\Hom (\Gamma, G)$ 
endowed with the compact-open topology. 
The group $G$ acts on $\Hom (\Gamma, G)$ by the conjugation. 
Let $[\rho]$ denote the conjugacy class of $\rho \in \Hom (\Gamma, G)$. 
The topological quotient $\Hom (\Gamma, G) / G$ consisting of such $[\rho]$ 
is not Hausdorff in general. 
For instance, 
if $\rho_{0} \colon \Gamma \to \bbZ \to G$ whose non-trivial images are parabolic, 
conjugate elements of $\rho_{0}$ accumulate the trivial representation. 
To obtain a manageable deformation space, 
we need to restrict the space of holonomy representations.

The group 
$G = \Isom^{+} (\bbH^{3}) \cong \mathrm{PSL}(2, \mathbb{C}) \cong \mathrm{SO}(3, \mathbb{C})$ 
is a complex algebraic group. 
Hence the space $\Hom (\Gamma, G)$ admits a structure of a complex affine algebraic set. 
The \emph{character variety} $\calX (\Gamma) = \Hom (\Gamma,G) / \! / G$ 
is defined as the GIT quotient in the category of algebraic varieties. 
Consider the Euclidean topology of the affine variety $\calX (\Gamma)$, 
which is induced as a subset in $\mathbb{C}^{N}$ for some $N$. 
It is known that the space $\calX (\Gamma)$ is 
the largest Hausdorff quotient of $\Hom (\Gamma, G)$ 
under the coarser relation than that of $\Hom (\Gamma, G) / G$ 
(see \cite{Schwarz89}).

A representation in $\Hom (\Gamma, G)$ is 
\emph{irreducible} if it is not conjugate to a representation whose image lies in 
$\left\{
\begin{bmatrix}
a & b \\
0 & a^{-1}
\end{bmatrix}
\in \mathrm{PSL}(2, \mathbb{C}) \right\}$. 
Let $\Hom^{\mathrm{irr}} (\Gamma, G)$ denote the set of irreducible representations. 
Let $t \colon \Hom (\Gamma, G) \to \calX (\Gamma)$ denote the projection. 
Any representation $\rho \in \Hom^{\mathrm{irr}} (\Gamma, G)$ satisfies that 
the set $t^{-1} (t(\rho))$ consists of the conjugates of $\rho$ 
(see \cite{BZ98, CS83, HP04, Sikora12} for details). 
Hence we may regard $\calX^{\mathrm{irr}} (\Gamma) = \Hom^{\mathrm{irr}} (\Gamma, G) / G$ 
as a subset of $\calX (\Gamma)$. 
In particular, $\calX^{\mathrm{irr}} (\Gamma)$ is Hausdorff. 
We write $[\rho] = t(\rho) \in \calX^{\mathrm{irr}} (\Gamma)$.

We define the deformation space of holed cone structures. 
Let $\widetilde{\calHC} (X,\Sigma)$ denote the space of holed cone metrics on $(X,\Sigma)$. 
More precisely, $\widetilde{\calHC} (X,\Sigma)$ is the disjoint union of the spaces of cone metrics on $(X \setminus \inter B(m), \Sigma)$ for $m \geq 0$, 
where $B(m)$ is $m$ disjoint balls in $X$, 
and each space is endowed with the $C^{\infty}$-topology induced by the metric tensors. 
Let $\calHC (X,\Sigma)$ denote the space of holed cone structures on $(X,\Sigma)$ 
endowed with the quotient topology from $\widetilde{\calHC} (X,\Sigma)$. 
The continuous map 
\[
\hol \colon \calHC (X,\Sigma) \to \calX (\Gamma) \quad \text{is defined by} \ \hol ([g]) = [\rho_{g}]. 
\]
Let
\[
\widetilde{\calHC}^{\mathrm{irr}}(X,\Sigma) \subset \widetilde{\calHC} (X,\Sigma) 
\quad \text{and} \quad 
\calHC^{\mathrm{irr}} (X,\Sigma) \subset \calHC (X,\Sigma) 
\] 
denote the preimages of $\calX^{\mathrm{irr}} (\Gamma)$ by $\hol([\cdot])$ and $\hol$. 
Note that a quotient space of a Hausdorff space is not Hausdorff in general. 
We show that $\calHC^{\mathrm{irr}}(X,\Sigma)$ is Hausdorff in Theorem~\ref{thm:hausdorff}.

Let us consider a condition for the holonomy of a neighborhood of the cone loci. 
For each $1 \leq i \leq n$, 
let $N(\Sigma_{i})$ be a regular neighborhood of the cone locus $\Sigma_{i}$ in $X$. 
The \emph{meridian} for $\Sigma$ is (the homotopy class of) 
an essential simple closed curve on $\partial N(\Sigma_{i})$ 
that is contractible in $N(\Sigma_{i})$. 
Fix a \emph{longitude} for $\Sigma$, 
which is a simple closed curve on $\partial N(\Sigma_{i})$ 
intersecting the meridian exactly once. 
Fix orientations of the meridian and longitude 
so that they form a positive basis for the orientation of $\partial N(\Sigma_{i})$ 
induced from that of  $X$. 
Fix commuting elements $\mu_{i}, \lambda_{i} \in \Gamma$ 
which respectively correspond to the meridian and longitude for $\Sigma_{i}$. 
Let $g \in \widetilde{\calHC} (X,\Sigma)$. Suppose that the cone angle at $\Sigma_{i}$ is not zero. 
Then the isometry $\rho_{g}(\lambda_{i}) \in G$ is 
a loxodromic transformation along a geodesic axis $\ell_{i}$. 
(This is not the identity.) 
The isometry $\rho_{g}(\mu_{i}) \in G$ is a rotation about $\ell_{i}$ 
of the cone angle at $\Sigma_{i}$. 
(This is the identity if the cone angle is $2n\pi$, which we allow.) 
The cone locus $\Sigma_{i}$ is added by the metric completion of $g$. 
In the metric completion of the universal cover of $X \setminus (\Sigma \cup \inter(B))$, 
a component of the preimage of $\Sigma_{i}$ is mapped to the geodesic $\ell_{i}$ 
by the continuous extension of the developing map $\dev_{g}$. 
If $\Sigma_{i}$ is a cusp, then $\rho (\lambda_{i})$ and $\rho (\mu_{i})$ are parabolic. 
Moreover, they are conjugate to elements 
acting on $\bbC \subset \partial \bbH^{3}$ 
by linearly independent translations. 
For this condition, we say that $\rho (\lambda_{i})$ and $\rho (\mu_{i})$ are 
\emph{parabolic with rank two}.

Let $\calX^{\mathrm{cone}}(X,\Sigma)$ denote the set of $[\rho] \in \calX^{\mathrm{irr}}(\Gamma)$ 
satisfying the following conditions for each $1 \leq i \leq n$: 
either $\rho (\lambda_{i})$ and $\rho (\mu_{i})$ are parabolic with rank two, or 
\begin{itemize}
   \item $\rho (\lambda_{i})$ is a loxodromic transformation along an axis, and 
   \item $\rho (\mu_{i})$ is a (possibly trivial) rotation about this axis. 
\end{itemize}
Then the image of $\calHC^{\mathrm{irr}} (X,\Sigma)$ by $\hol$ 
is contained in $\calX^{\mathrm{cone}}(X,\Sigma)$. 
Note that a rotation angle has an arbitrarity modulo $2\pi$. 
We define the space of representations equipped with compatible cone angles. 
Let $\widehat{\calX}^{\mathrm{cone}}(X,\Sigma)$ denote the space consisting of the elements 
$([\rho], \theta_{1}, \dots, \theta_{n}) \in \calX^{\mathrm{cone}}(X,\Sigma) \times [0, \infty)^{n}$ 
such that 
\begin{itemize}
   \item $\theta_{i} = 0$ if $\rho (\lambda_{i})$ and $\rho (\mu_{i})$ are parabolic, 
   \item otherwise $\rho (\mu_{i})$ is a rotation of angle $\theta_{i}$ for the fixed orientation. 
\end{itemize}
Let 
\[
\hat{\pi} \colon \widehat{\calX}^{\mathrm{cone}}(X,\Sigma) \to \calX^{\mathrm{cone}}(X,\Sigma)
\] 
denote the natural projection. 
A fiber of $\hat{\pi}$ consists of elements of the form 
$([\rho], \theta_{1} + 2\pi k_{1}, \dots, \theta_{n} + 2\pi k_{n})$ for some $\theta_{i} \in [0, 2\pi]$, 
where $k_{i} = 0$ if $\theta_{i} = 0$, and otherwise $k_{i}$ is a non-negative integer. 
Define the continuous map 
\[
\widehat{\hol} \colon \calHC^{\mathrm{irr}}(X,\Sigma) \to \widehat{\calX}^{\mathrm{cone}}(X,\Sigma) 
\quad \text{by} \ \widehat{\hol} ([g]) = (\hol ([g]), \Theta (g)), 
\]
where $\Theta (g)$ is $n$-tuple of the cone angles for the metric $g$. 
Then $\hol = \hat{\pi} \circ \widehat{\hol}$. 
Let 
\begin{align*}
\widetilde\calHC^{\mathrm{irr}}_{+} (X,\Sigma) \subset \widetilde{\calHC}^{\mathrm{irr}} (X,\Sigma), 
& \quad 
\calHC^{\mathrm{irr}}_{+} (X,\Sigma) \subset \calHC^{\mathrm{irr}} (X,\Sigma), \\
\calX^{\mathrm{cone}}_{+} (X,\Sigma) \subset \calX^{\mathrm{cone}} (X,\Sigma), 
& \quad
\widehat{\calX}^{\mathrm{cone}}_{+} (X,\Sigma) \subset \widehat{\calX}^{\mathrm{cone}} (X,\Sigma)
\end{align*}
denote the subspaces with positive cone angles.

The spaces $\calX^{\mathrm{cone}}(X,\Sigma)$ and $\widehat{\calX}^{\mathrm{cone}}(X,\Sigma)$ 
may not be topological manifolds, but their topologies are not very wild. 

\begin{lem}
\label{lem:contract}
The spaces $\calX^{\mathrm{cone}}(X,\Sigma)$ and $\widehat{\calX}^{\mathrm{cone}}(X,\Sigma)$ 
are locally contractible. 
\end{lem}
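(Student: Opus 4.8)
The plan is to realize both spaces as reasonably tame subsets of a real affine space and then to invoke the fact that semialgebraic sets are locally contractible. Recall the local conic structure theorem: for a semialgebraic set $S \subset \bbR^{m}$ and a point $x \in S$, all sufficiently small closed balls $\overline{B}(x,\varepsilon)$ meet $S$ in a set semialgebraically homeomorphic to the cone on the link $S \cap \partial B(x,\varepsilon)$ with apex $x$. Such a cone is contractible, so $x$ has arbitrarily small contractible neighborhoods. (Equivalently, one may use that every semialgebraic set admits a semialgebraic triangulation.) Thus it suffices to exhibit $\calX^{\mathrm{cone}}(X,\Sigma)$ as a semialgebraic set and then to reduce the case of $\widehat{\calX}^{\mathrm{cone}}(X,\Sigma)$ to it.

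First I would check that $\calX^{\mathrm{cone}}(X,\Sigma)$ is semialgebraic. Since $\calX(\Gamma) \subset \bbC^{N}$ is a complex affine variety, it is a real algebraic subset of $\bbR^{2N}$, hence semialgebraic; the irreducible locus $\calX^{\mathrm{irr}}(\Gamma)$ is the complement of the Zariski-closed reducible locus and so is again semialgebraic. It then remains to translate the conditions defining $\calX^{\mathrm{cone}}$ into semialgebraic conditions on the (regular, hence polynomial) trace functions $\tr \rho(\lambda_{i})$, $\tr \rho(\mu_{i})$, and $\tr \rho(\mu_{i}\lambda_{i})$. The point is that $\mu_{i}$ and $\lambda_{i}$ commute, so $\rho(\mu_{i})$ and $\rho(\lambda_{i})$ commute and hence share fixed-point data. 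Concretely, $\rho(\lambda_{i})$ is loxodromic exactly when $(\tr \rho(\lambda_{i}))^{2} \notin [0,4] \subset \bbR \subset \bbC$, a semialgebraic condition; given this, the commuting element $\rho(\mu_{i})$ either fixes both endpoints of the axis or exchanges them, and it is a rotation about that axis precisely when it fixes both endpoints and is elliptic or trivial, which amounts to $\tr \rho(\mu_{i}) \in [-2,2]$ together with the exclusion of the order-two ``swap'' elements (detected by $\tr \rho(\mu_{i}\lambda_{i}) \neq 0$ on the locus $\tr \rho(\mu_{i}) = 0$). Finally, $\rho(\lambda_{i})$ and $\rho(\mu_{i})$ being parabolic with rank two amounts to $(\tr \rho(\lambda_{i}))^{2} = 4 = (\tr \rho(\mu_{i}))^{2}$ together with the $\bbR$-linear independence of their translation vectors on the common horosphere, the latter being an open semialgebraic condition. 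Taking the finite Boolean combination over $1 \leq i \leq n$ exhibits $\calX^{\mathrm{cone}}(X,\Sigma)$ as a semialgebraic set, so it is locally contractible.

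For $\widehat{\calX}^{\mathrm{cone}}(X,\Sigma)$ I would argue that the projection $\hat{\pi}$ is a local homeomorphism onto $\calX^{\mathrm{cone}}(X,\Sigma)$, so that local contractibility is inherited. On the locus where $\rho(\lambda_{i})$ is loxodromic, the oriented rotation angle of $\rho(\mu_{i})$ about the axis defines a continuous function $\beta_{i} \colon \calX^{\mathrm{cone}}(X,\Sigma) \to \bbR/2\pi\bbZ$; near any point it maps a small neighborhood into an arc of length less than $2\pi$ and therefore lifts uniquely to a continuous real-valued function near the prescribed value $\theta_{i}$. The integers $k_{i}$ in the description of the fiber of $\hat{\pi}$ are locally constant and consecutive sheets are separated by $2\pi$ in the $\theta_{i}$ coordinate, so this lift provides a homeomorphism from a neighborhood in $\calX^{\mathrm{cone}}$ onto a neighborhood of the given point of $\widehat{\calX}^{\mathrm{cone}}$, at least at points where every $\rho(\lambda_{i})$ is loxodromic; the points with some $\theta_{i} = 0$ are handled by the cusp analysis below. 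Combined with the previous step, this yields local contractibility of $\widehat{\calX}^{\mathrm{cone}}(X,\Sigma)$.

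The main obstacle I anticipate lies along the cusp (parabolic) locus, in two places. First, verifying that the ``rotation about the axis'' and ``parabolic with rank two'' conditions are genuinely semialgebraic requires care exactly where the two fixed points of $\rho(\lambda_{i})$ collide and the trace functions alone fail to separate cases (for example near $\tr \rho(\mu_{i}) = 0$), which is why the auxiliary function $\tr \rho(\mu_{i}\lambda_{i})$ is needed. Second, for $\widehat{\calX}^{\mathrm{cone}}$ the local-homeomorphism claim is delicate at points with $\theta_{i} = 0$: there the axis of $\rho(\lambda_{i})$ degenerates as $[\rho]$ tends to the cusp, and one must check that the angle function $\theta_{i}$ extends continuously by $0$ along the relevant branch (the elliptic rotations with angle tending to $0^{+}$, whose traces tend to $+2$), rather than to the spurious branch with angle tending to $2\pi$. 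Establishing this continuity at the cusp is the crux of the argument.
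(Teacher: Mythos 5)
Your strategy is the same as the paper's: exhibit $\calX^{\mathrm{cone}}(X,\Sigma)$ as (an open subset of) a real semialgebraic subset of $\calX (\Gamma)$, invoke the local conic structure theorem for semialgebraic sets, and then transfer local contractibility to $\widehat{\calX}^{\mathrm{cone}}(X,\Sigma)$ through $\hat{\pi}$. Your treatment of $\calX^{\mathrm{cone}}(X,\Sigma)$ is correct and in fact more detailed than the paper's. One small remark: the first of your two ``anticipated obstacles'' is vacuous. An element commuting with a loxodromic $\rho(\lambda_{i})$ can never exchange the endpoints of the axis, because conjugation by it must carry the attracting fixed point of $\rho(\lambda_{i})$ to the attracting fixed point of $\rho(\lambda_{i})$ itself; so the auxiliary function $\tr \rho(\mu_{i}\lambda_{i})$ is unnecessary, though including the redundant condition is harmless.

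The genuine gap is exactly the step you label ``the crux'' and do not carry out: the points of $\widehat{\calX}^{\mathrm{cone}}(X,\Sigma)$ with some $\theta_{i}=0$. Worse, the route you sketch there cannot succeed, because the claim it is meant to salvage --- that $\hat{\pi}$ is a local homeomorphism --- is false at such points. Near a cusp character $[\rho_{0}]$, the set $\calX^{\mathrm{cone}}(X,\Sigma)$ in general contains \emph{both} branches of near-parabolic rotations: meridian rotations of angle tending to $0^{+}$ and of angle tending to $2\pi^{-}$ with respect to the fixed orientation. (For a hyperbolic knot exterior, Thurston's Dehn surgery deformations of the complete structure realize every small meridional complex length $u=\pm i\epsilon$, and the two signs are precisely these two branches; both accumulate at $[\rho_{0}]$. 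They are not distinguished by traces: in $\mathrm{PSL}(2,\bbC)$ the trace is only defined up to sign, and both branches have trace squared tending to $4$, so your phrase ``traces tend to $+2$'' does not single out a branch.) Consequently the image under $\hat{\pi}$ of a small neighborhood of $([\rho_{0}],0,\dots)$ is the set of nearby characters that are either cusped at $\Sigma_{i}$ or have rotation angle less than $\epsilon$; this omits the angle-near-$2\pi$ branch, which also accumulates at $[\rho_{0}]$, so the image is \emph{not open} in $\calX^{\mathrm{cone}}(X,\Sigma)$. No continuity check at the cusp can repair a map that is simply not open there.

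The repair stays inside the semialgebraic framework instead of inheriting contractibility along a local homeomorphism. The restriction of $\hat{\pi}$ to a small neighborhood of $([\rho_{0}],0,\dots)$ \emph{is} a homeomorphism onto the (non-open) image described above, and that image is itself semialgebraic: the two near-parabolic branches are separated by the sign of $\mathrm{Im}\bigl(z_{i}^{2}\bigr)$, where $z_{i}$ is the eigenvalue of $\rho(\mu_{i})$ on the eigenline of the attracting fixed point of $\rho(\lambda_{i})$ (the square $z_{i}^{2}$ is well defined in $\mathrm{PSL}(2,\bbC)$, and the condition is semialgebraic by Tarski--Seidenberg after eliminating the conjugating matrix). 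Applying the conic structure theorem to this set at $[\rho_{0}]$ then gives local contractibility of $\widehat{\calX}^{\mathrm{cone}}(X,\Sigma)$ at the cusp point directly. For comparison, the paper disposes of this entire second half with the single sentence that $\widehat{\calX}^{\mathrm{cone}}(X,\Sigma)$ is locally homeomorphic to $\calX^{\mathrm{cone}}(X,\Sigma)$, which is accurate at points where all $\theta_{i}>0$ --- the only case used in Theorem~\ref{thm:cover} --- but is equally glib at cusps; your argument at the positive-angle points fills in that part correctly.
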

\begin{proof}
For any $[\rho] \in \calX (\Gamma)$ and $\gamma \in \Gamma$, 
the trace of $\rho (\gamma)$ is determined up to sign. 
The map $[\rho] \mapsto (\tr (\rho (\gamma)))^{2}$ is a regular map on the affine variety $\calX (\Gamma)$. 
The subspace $\calX^{\mathrm{irr}} (\Gamma)$ is open in $\calX (\Gamma)$. 

The condition for the $i$-th meridian to define $\calX^{\mathrm{cone}}(X,\Sigma)$ 
is that the trace of $\rho (\mu_{i})$ belongs to $[-2,2]$. 
Since $\mu_{i}$ and $\lambda_{i}$ commute, 
the condition for the $i$-th longitude is that 
\begin{itemize}
\item $\rho (\lambda_{i})$ and $\rho (\mu_{i})$ are parabolic with rank two if $\rho (\mu_{i})$ is parabolic, 
\item the trace of $\rho (\lambda_{i})$ belongs to $\bbC \setminus [-2,2]$ if $\rho (\mu_{i})$ is elliptic or the identity. 
\end{itemize}
This is an open condition 
under the supposition of the condition for the $i$-th meridian. 

Therefore the space $\calX^{\mathrm{cone}}(X,\Sigma)$ is 
an open subspace of a real semi-algebraic subset of the affine algebraic set $\calX (X,\Sigma)$. 
Each point of a real semi-algebraic set has a neighborhood 
homeomorphic to a cone (in the sense of topology) 
(see \cite[Theorem 5.48]{BPR03}). 
Hence $\calX^{\mathrm{cone}}(\Gamma)$ is locally contractible. 
The space $\widehat{\calX}^{\mathrm{cone}}(X,\Sigma)$ 
is locally homeomorphic to $\calX^{\mathrm{cone}}(X,\Sigma)$. 
\end{proof}

We consider a problem: 
which holonomy representations are realized by holed cone structures? 
For elements in $\widehat{\calX}^{\mathrm{cone}}(X,\Sigma)$, 
we try to construct holed cone structures. 
For this purpose, we introduce the notion of a ``handle decomposition'' for a holed cone metric. 
We define a \emph{handle decomposition} of $(X, \Sigma)$ 
with holes $B$ 
as a filtration $X_{0} \subset X_{1} \subset X_{2} \subset X_{3} = X$ 
of smooth submanifolds satisfying that 
\begin{itemize}
\item $X_{0}$ is the disjoint union of regular neighborhoods of the cone loci and 0-handles, 
\item $X_{i}$ is obtained by attaching $i$-handles to $X_{i-1}$ for $i =1,2,3$, and 
\item $X_{2} \subset X \setminus B$. 
\end{itemize}
If $g$ is a holed cone metric on $(X, \Sigma)$ with holes $B$, 
then we call a handle decomposition of $(X, \Sigma)$ with holes $B$ 
a handle decomposition for $g$. 
In this case, the restriction of $g$ to $X_{2}$ is also a holed cone metric on $(X, \Sigma)$.

\subsection{Deformations}

We first show that 
if a continuous deformation of holed cone metrics preserves the holonomy representation, 
it also preserves the holed cone structure. 

\begin{lem}
\label{lem:homotopy}
Let $g_{t} \in \widetilde{\calHC}^{\mathrm{irr}} (X,\Sigma)$ be a continuous family 
for $0 \leq t \leq 1$. 
Suppose that $\hol ([g_{t}])$ are constant. 
Then $[g_{t}] \in \calHC^{\mathrm{irr}} (X,\Sigma)$ are constant. 
\end{lem}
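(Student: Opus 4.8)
The plan is to show that the map $t \mapsto [g_t]$ is locally constant; since $[0,1]$ is connected, this gives that $[g_t]$ is constant. So I fix $t_0 \in [0,1]$ and, for all $t$ in a neighborhood of $t_0$, produce an equivalence between $g_{t_0}$ and $g_t$ in the sense of Definition~\ref{dfn:equivalence}. First I would normalize the holonomy. After fixing a basepoint and a frame over one of its lifts, the developing map $\dev_{g_t}$ and the representation $\rho_{g_t} \in \Hom(\Gamma, G)$ depend continuously on $t$. By hypothesis $\hol(g_t) = [\rho_{g_t}]$ is a constant irreducible character, so all $\rho_{g_t}$ lie in a single $G$-orbit; since an irreducible representation has trivial stabilizer under conjugation, this orbit is a homeomorphic image of $G$, and its continuous inverse yields a family $c_t \in G$ with $c_{t_0} = \mathrm{id}$. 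Replacing $\dev_{g_t}$ by $c_t^{-1} \circ \dev_{g_t}$, I may assume $\rho_{g_t} \equiv \rho$ is literally constant near $t_0$. I would also record that the cone angles are constant: each $\theta_i(t) = \Theta(g_t)_i$ is continuous, while its reduction modulo $2\pi$ is the rotation angle of the fixed isometry $\rho(\mu_i)$, and a continuous function valued in a fixed coset of $2\pi\bbZ$ is constant. Thus $\widehat{\hol}(g_t)$ is in fact constant, and the local geometry near each cone locus $\Sigma_i$ is the same for all $t$ near $t_0$.

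Next I would fix a handle decomposition $X_0 \subset X_1 \subset X_2 \subset X_3 = X$ for $g_{t_0}$ as introduced above, so that $X_2$ is compact, contains $\Sigma$, and the complement $X \setminus \inter(X_2)$ is a union of balls. Since the holes vary continuously, for $t$ near $t_0$ the holes of $g_t$ are disjoint from the compact set $X_2$, so both $g_{t_0}$ and $g_t$ restrict to holed cone metrics on $X_2$, which are equivalent to $g_{t_0}$ and $g_t$ respectively by the restriction moves (2) and (3) along these $3$-handle balls. It therefore suffices to build an isometry $(X_2, g_{t_0}) \to (X_2, g_t)$ isotopic to the identity. I would construct it from the developing maps, chart by chart over the handle structure. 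On the lift of a $0$-handle the map $\dev_{g_{t_0}}$ is an embedding, and for $t$ near $t_0$ the $C^\infty$-close map $\dev_{g_t}$ is also an embedding onto a nearby region; there I set $f_t = (\dev_{g_t})^{-1} \circ \dev_{g_{t_0}}$, a local isometry. On a neighborhood of a cone locus I would instead use the fixed local model: the quotient of a tube around the axis of $\rho(\lambda_i)$ by $\langle \rho(\mu_i), \rho(\lambda_i) \rangle$ is the standard cone neighborhood with the (constant) cone data, so $g_{t_0}$ and $g_t$ have canonically isometric cone neighborhoods that match the chart formula.

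Because $\dev_{g_t}(\gamma \cdot x) = \rho(\gamma)\,\dev_{g_t}(x)$ with the same $\rho$, the expression $(\dev_{g_t})^{-1} \circ \dev_{g_{t_0}}$ is $\Gamma$-equivariant, so it is independent of the chosen lift and patches over the overlaps of the charts. The patched equivariant local isometry descends to a diffeomorphism $f_t \colon X_2 \to X_2$ that is an isometry from $g_{t_0}$ to $g_t$, depends continuously on $t$, and satisfies $f_{t_0} = \mathrm{id}$. Extending $f_t$ over the hole balls gives a diffeomorphism of $X$, and the path $s \mapsto f_{t_0 + s(t - t_0)}$ exhibits it as isotopic to the identity preserving $\Sigma$. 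Hence move (1) of Definition~\ref{dfn:equivalence}, combined with the hole-restriction moves (2) and (3), shows $[g_t] = [g_{t_0}]$ for all $t$ near $t_0$, which completes the argument.

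The hard part will be the patching step. The developing maps are only immersions, so $(\dev_{g_t})^{-1}$ has no global meaning, and one must show that the locally defined isometries glue to a single well-defined equivariant map on all of $X_2$. This is a parametrized rigidity statement for developing maps relative to a fixed holonomy, and making it precise requires uniform control, in $t$ and over the compact $X_2$, of the scale on which $\dev_{g_t}$ is an embedding, together with care at the cone loci, where the developing map wraps around a geodesic rather than embedding a ball.
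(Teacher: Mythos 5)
Your overall skeleton (local constancy plus connectedness of $[0,1]$, normalizing the holonomy to be literally constant in $\Hom(\Gamma,G)$ using irreducibility, fixing a handle decomposition, comparing developing maps) is the same as the paper's, and your normalization of $\rho_{g_t}$ and the observation that the cone angles are constant are both fine. The step that fails is the claim that the patched map descends to a diffeomorphism $f_t \colon X_2 \to X_2$ that is an isometry from $g_{t_0}$ to $g_t$. No such map can exist in general: once the holonomy is normalized to be constant, an isometry isotopic to the identity from $(X_2; g_{t_0})$ onto $(X_2; g_t)$ would force $\dev_{g_{t_0}}$ and $\dev_{g_t}$ to have \emph{equal} images on the preimage of $X_2$, but these developed images are only $C^\infty$-close, not equal, since the metrics $g_t$ genuinely vary (only their holonomy is fixed). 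The correspondence defined by matching developments therefore carries $X_2$ onto a nearby region that differs from $X_2$ near its boundary, and near $\partial X_2$ it may not even be defined. The paper's proof turns at exactly this point: it shrinks the $1$- and $2$-handles so that the developed image of the preimage of $X_2$ for $g_t$ is \emph{contained} in that for $g_{t_0}$, and concludes only that $g_t|_{X_2}$ is isometrically contained in $g_{t_0}$. This weaker conclusion suffices precisely because the restriction moves (2) and (3) of Definition~\ref{dfn:equivalence} make each metric equivalent to its restriction to a smaller complement of balls; that flexibility is what the holed equivalence relation is designed for. In your write-up you use moves (2) and (3) only to cut from $X$ down to $X_2$; using them once more to absorb the boundary mismatch (i.e., aiming for an isometric embedding of a shrunk $X_2$ into $(X \setminus \inter(B_{t}); g_{t})$ rather than an isometry onto $X_2$) is the missing idea, and with it your argument closes.

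The patching step that you flag as the ``hard part'' and leave open is the second gap, and the handle decomposition is exactly what makes it tractable. Rather than patching branches of $(\dev_{g_t})^{-1}$ over an open cover of $X_2$ and checking consistency on overlaps via uniform injectivity estimates, propagate handle by handle: the comparison map is the identity on the preimage of $X_0$, because after the holonomy is made constant the developing maps can be arranged to agree there --- the tube or cusp neighborhoods of the cone loci are determined by $\rho(\mu_i)$ and $\rho(\lambda_i)$, which is why the paper fixes $X_0$ first --- and each $1$-handle and $2$-handle is compact, simply connected, and attached along the region already treated, so a germ of the comparison map continues uniquely across it, and does continue for $t$ close to $t_0$ by compactness of the handle. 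This ordering eliminates both the overlap-consistency problem and the degeneration of the injectivity radius at the cone loci that you correctly worry about.
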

\begin{proof}
Let $I_{0}$ be the set consisting of points $s \in [0,1]$ satisfying that 
$[g_{t}] = [g_{0}]$ for any $t \in [0,s]$. 
It is sufficient to show that $I_{0} = [0,1]$. 
Clearly $0 \in I_{0}$. 

Let $s \in [0,1]$. 
We consider a small deformation of holed cone metrics $g_{t}$ 
for $s - \epsilon < t < s + \epsilon$ and some $\epsilon > 0$. 
We take a handle decomposition for each $g_{t}$ 
with a single 0-handle. 
Since $\epsilon$ is sufficiently small, we may assume that 
the handle decompositions for $g_{t}$ are topologically equivalent for $s - \epsilon < t < s + \epsilon$. 
Fix a basepoint in the 0-handle. 
Moreover, we may assume that 
the holonomy representation is constant in the space $\Hom (\Gamma, G)$.

Since the holonomies of the longitudes of the cone loci are fixed 
to be non-trivial loxodromic or parabolic elements, 
the cone loci are isometrically determined. 
Hence we may assume that the developing maps have identical restrictions 
to the preimage of the space $X_{0}$. 
By replacing them with smaller handles, 
we may assume that the restriction of the developing map for $g_{t}$ to the preimage of $X_{2}$ 
are contained in that for $g_{s}$. 
By taking the projections to $X_{2} \setminus \Sigma$, 
we obtain that $(X_{2}, \Sigma; g_{t})$ are isometrically embedded in $(X_{2}, \Sigma; g_{s})$. 
Thus each $g_{t}$ is equivalent to $g_{s}$. 

The argument for $s \in I_{0}$ implies that $I_{0}$ is open. 
Let $t_{1} = \sup I_{0}$. 
The argument for $s = t_{1}$ implies that $t_{1} = 1 \in I_{0}$. 
Therefore $I_{0} = [0,1]$. 
\end{proof}

We will show that
any fiber of the map 
$\widehat{\hol} \colon \calHC^{\mathrm{irr}}(X,\Sigma) \to \widehat{\calX}^{\mathrm{cone}}(X,\Sigma)$ 
is discrete. 
For this purpose, we introduce an invariant called the \emph{twist} for a holed cone structure 
with respect to a fixed handle decomposition on a holed cone metric of reference.

We focus on immersed annuli in $\bbR^{3}$. 
We use the notation $\bbZ_{2} = \bbZ / 2\bbZ = \{0,1\}$. 
We say that 
an immersed annulus $f \colon S^{1} \times [0,1] \to \bbR^{3}$ 
bounds an immersed 2-handle 
if there is an immersion of $D^{2} \times [0,1]$ in $\bbR^{3}$ 
whose restriction to $\partial D^{2} \times [0,1]$ coincides with the immersion $f$. 
Here we fix the normal direction on the annulus. 

We use the Smale-Hirsch theorem to describe the regular homotopy classes of handles. 
For manifolds $M$ and $N$, 
let $\Imm (M,N)$ denote the space of immersions from $M$ to $N$ with the $C^{\infty}$-topology, 
and let $\Mon (TM,TN)$ denote the space of vector bundle maps 
from the tangent bundle $TM$ of $M$ to the tangent bundle $TN$ of $N$
that is monomorphism on each fiber. 
See \cite[Proposition 3.8, Theorem 3.9]{Adachi93} for details. 

\begin{lem}[The Smale-Hirsch fibration lemma]
\label{lem:SH-fibration}
The restriction map 
\[
\Imm (D^{k} \times D^{m-k}, M^{m}) \to 
\Imm (\partial D^{k} \times [0,1] \times D^{m-k}, M^{m})
\] 
induced by the inclusion of a collar of the boundary 
$\partial D^{k} \times [0,1] \hookrightarrow D^{k}$ 
is a Serre fibration for $k < m$. 
\end{lem}

\begin{thm}[The Smale-Hirsch theorem]
\label{thm:SH}
The differential map 
\[
d \colon \Imm (M^{m},N^{n}) \to \Mon (TM^{m},TN^{n}) 
\] 
is a weak homotopy equivalence for $m < n$. 
\end{thm}

\begin{lem}
\label{lem:2handle}
Let $f_{t} \colon S^{1} \times [0,1] \to \bbR^{3}$ be a continuous family of immersions of an annulus 
for $0 \leq t \leq 1$, 
i.e. a regular homotopy between immersed annuli $f_{0}$ and $f_{1}$. 
If $f_{0}$ bounds an immersed 2-handle, 
there is a continuous family of immersed 2-handles bounded by $f_{t}$ 
which starts from this 2-handle. 
\end{lem}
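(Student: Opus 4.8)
The plan is to reformulate the lemma as a path-lifting property and to deduce it from the h-principle for immersions. Writing $A = S^{1}\times[0,1]$ for the annulus and $H = D^{2}\times[0,1]$ for the handle, let $\Imm(H,\bbR^{3})$ and $\Imm(A,\bbR^{3})$ denote the spaces of smooth immersions with the $C^{\infty}$-topology, and let $\mathrm{res}\colon \Imm(H,\bbR^{3})\to\Imm(A,\bbR^{3})$ be restriction to $\partial D^{2}\times[0,1]=A$. An immersed $2$-handle bounding $f$ is exactly a point of $\mathrm{res}^{-1}(f)$, so the hypothesis is that $F_{0}\in\mathrm{res}^{-1}(f_{0})$, the datum $f_{t}$ is a path in $\Imm(A,\bbR^{3})$, and the conclusion is that this path lifts through $\mathrm{res}$ starting at $F_{0}$. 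Thus it suffices to show that $\mathrm{res}$ has the path-lifting property, which I would deduce from the statement that $\mathrm{res}$ is a Serre fibration. (We may assume $f_{t}$ is smooth in $t$ after a $C^{\infty}$-small perturbation, since nearby immersions are regularly homotopic.)

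First I would produce lifts locally in $t$ by a direct construction, which also exhibits the concrete geometry. Fix $t_{0}$ and $F_{t_{0}}\in\mathrm{res}^{-1}(f_{t_{0}})$. The velocity $X_{t}(p)=\tfrac{\partial}{\partial t}f_{t}(p)\in T_{f_{t}(p)}\bbR^{3}=\bbR^{3}$ is a vector field along $f_{t}$ over $A$. Choose a collar $S^{1}\times[0,1]\times[0,\delta)$ of $A$ in $H$ (with the last coordinate the inward radial direction of $D^{2}$) and a cutoff $\chi$ with $\chi(0)=1$ supported in $[0,\delta)$, and set $Y_{t}:=\chi\cdot X_{t}$ on the collar and $Y_{t}:=0$ elsewhere, a continuous-in-$t$, uniformly bounded map $H\to\bbR^{3}$. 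Define $F_{t}(x):=F_{t_{0}}(x)+\int_{t_{0}}^{t} Y_{\tau}(x)\,d\tau$. On $A$ this integrates $X_{\tau}$, so $F_{t}|_{A}=f_{t}$; and since $dF_{t}=dF_{t_{0}}+\int_{t_{0}}^{t} dY_{\tau}\,d\tau$ with $dY_{\tau}$ uniformly bounded and $dF_{t_{0}}$ an immersion on the compact $H$, the map $F_{t}$ remains an immersion for $|t-t_{0}|$ small. This gives lifts on a neighborhood of $t_{0}$ in both directions, so $\mathrm{res}$ is at least a microfibration, and a connectedness argument on $[0,1]$ as in the proof of Lemma~\ref{lem:homotopy} reduces the lemma to continuing the lift past the supremum of the interval on which it is defined.

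The main obstacle is precisely this continuation: over a long $t$-interval the accumulated deformation $\int dY_{\tau}\,d\tau$ need not stay small, so a single ambient velocity field may fail to keep $F_{t}$ an immersion, and a microfibration alone does not guarantee that the lift extends across $[0,1]$ without degenerating. To overcome this I would invoke that $\mathrm{res}$ is a genuine Serre fibration, which is the parametric and relative h-principle for immersions (Hirsch--Smale; the covering homotopy theorem). The hypotheses hold here: $H=D^{2}\times[0,1]$ is a compact $3$-manifold with nonempty boundary, hence has no closed components, so codimension-zero immersion theory applies, while the restriction is taken to the surface $A$, which sits in $\bbR^{3}$ in positive codimension. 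Concretely, the $1$-jet map identifies $\Imm(H,\bbR^{3})$ and $\Imm(A,\bbR^{3})$ up to weak homotopy equivalence with the corresponding spaces of fibrewise-injective bundle maps $TH\to T\bbR^{3}$ and $TA\to T\bbR^{3}$, naturally in the restriction; on these formal spaces the restriction map is a fibration because it is the restriction of sections of a bundle whose fibre is a space of linear monomorphisms. Transporting back through the h-principle equivalence shows that $\mathrm{res}$ has the homotopy lifting property. Lifting the path $t\mapsto f_{t}$ with initial condition $F_{0}$ then yields the required continuous family of immersed $2$-handles $F_{t}$ bounding $f_{t}$.
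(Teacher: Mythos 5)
Your proof is correct and follows essentially the same route as the paper: both rest on the Smale--Hirsch fibration lemma (the covering homotopy theorem), which asserts that the restriction map $\Imm(D^{2}\times[0,1],\bbR^{3})\to\Imm(S^{1}\times[0,1],\bbR^{3})$ is a Serre fibration, and then lift the path $f_{t}$ starting at the given $2$-handle. One caveat on your supplementary justification: the step ``transporting back through the h-principle equivalence'' is not valid reasoning on its own (a map weakly equivalent to a Serre fibration need not itself have the homotopy lifting property), but this is harmless here because, exactly as in the paper, you may simply cite the Hirsch--Smale covering homotopy theorem, which directly states that the restriction map between the immersion spaces themselves is a Serre fibration.
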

\begin{proof}
Lemma~\ref{lem:SH-fibration} implies that 
the restriction map $r \colon \Imm (D^{2} \times [0,1], \bbR^{3}) \to \Imm (\partial D^{2} \times [0,1] \times [0,1], \bbR^{3})$ 
induced by the inclusion $\partial D^{2} \times [0,1] \hookrightarrow D^{2}$ 
is a Serre fibration. 
Suppose that the immersed annulus $f_{0}$ bounds an immersed 2-handle. 
In other words, there is an immersion $\tilde{h}_{0} \in \Imm (D^{2} \times [0,1], \bbR^{3})$ 
such that $f_{0}$ is the restriction of $\tilde{h}_{0}$ to $\partial D^{2} \times [0,1] = S^{1} \times [0,1]$. 
Take a path $\{ h_{t} \}_{0 \leq t \leq 1}$ in the base space $\Imm (\partial D^{2} \times [0,1] \times [0,1], \bbR^{3})$ 
such that $r(\tilde{h}_{0}) = h_{0}$ and $h_{t}(x,0,z) = f_{t}(x,z)$ for $x \in \partial D^{2} = S^{1}$ and $z \in [0,1]$. 
The lifting property implies that 
there is a path $\{ \tilde{h}_{t} \}_{0 \leq t \leq 1}$ in $\Imm (D^{2} \times [0,1], \bbR^{3})$ 
such that $r(\tilde{h}_{t}) = h_{t}$. 
Thus we obtain a desired continuous family of immersed 2-handles. 
\end{proof}

We obtain a possibly known result 
for the existence and uniqueness of a 2-handle up to regular homotopy 
(cf. the sphere eversion).

\begin{lem}
\label{lem:bound}
It holds that 
$\pi_{0} (\Imm (S^{1} \times [0,1], \bbR^{3})) \cong \pi_{1}(\mathrm{SO}(3)) \cong \bbZ_{2}$. 
An immersed annulus bounds an immersed 2-handle if and only if 
it belongs to the component corresponding to $1 \in \bbZ_{2}$. 
In this case, the regular homotopy class of 2-handles 
with a fixed boundary annulus is unique. 
\end{lem}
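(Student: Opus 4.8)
The plan is to push everything through the Smale--Hirsch $h$-principle, so that immersed annuli and handles are replaced by their tangent-frame data and the three assertions become statements about maps into the Stiefel manifold $V_{2}(\bbR^{3})$ of $2$-frames, which is diffeomorphic to $\mathrm{SO}(3)$. For the first assertion, since $\dim (S^{1} \times [0,1]) = 2 < 3$, Hirsch's immersion theorem gives a weak homotopy equivalence
\[
\Imm (S^{1} \times [0,1], \bbR^{3}) \simeq \mathrm{Mon} (T(S^{1} \times [0,1]), T\bbR^{3}),
\]
the space of fiberwise injective bundle maps. Both bundles are trivial (the annulus is parallelizable and $T\bbR^{3}$ is trivial), so a monomorphism is a base map $S^{1} \times [0,1] \to \bbR^{3}$ together with a map into the space of injective linear maps $\bbR^{2} \to \bbR^{3}$. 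The base factor is contractible, and Gram--Schmidt retracts the injective linear maps onto $V_{2}(\bbR^{3}) \cong \mathrm{SO}(3)$; concretely this sends an immersion with its fixed normal to the oriented frame built from the tangent $2$-frame. Using that $[0,1]$ is contractible,
\[
\Imm (S^{1} \times [0,1], \bbR^{3}) \simeq \mathrm{Map} (S^{1} \times [0,1], \mathrm{SO}(3)) \simeq \mathrm{Map} (S^{1}, \mathrm{SO}(3)),
\]
so $\pi_{0}$ is the set of free homotopy classes of loops in $\mathrm{SO}(3)$; as $\pi_{1}(\mathrm{SO}(3)) \cong \mathbb{Z}_{2}$ is abelian these are exactly its elements, giving $\pi_{0}(\Imm (S^{1} \times [0,1], \bbR^{3})) \cong \pi_{1}(\mathrm{SO}(3)) \cong \mathbb{Z}_{2}$.

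For the bounding criterion I would compare annulus and handle through the restriction map $r \colon \Imm (D^{2} \times [0,1], \bbR^{3}) \to \Imm (S^{1} \times [0,1], \bbR^{3})$, a Serre fibration by Lemma~\ref{lem:2handle}. The ``only if'' direction needs no further $h$-principle: given a filling immersion $F$, the derivative $dF$ is an $\mathrm{SO}(3)$-valued full-frame map on the contractible domain $D^{2} \times [0,1]$, hence null-homotopic; its restriction to the boundary annulus is the frame $(dF(\partial_{r}), df(\partial_{\theta}), df(\partial_{t}))$, whose first two vectors are the annulus tangent $2$-frame. Under the forgetful diffeomorphism $V_{3}^{+}(\bbR^{3}) \cong V_{2}(\bbR^{3})$ (the third vector being the cross product), null-homotopy of this $3$-frame forces null-homotopy of the tangent $2$-frame, i.e. $[f]=0$. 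For the ``if'' direction I would start from the standard flat $2$-handle $F_{0}$, whose boundary annulus $f_{0}$ has $[f_{0}]=0$; when $[f]=0$ the annuli $f$ and $f_{0}$ are regularly homotopic, and lifting this regular homotopy through the fibration $r$ (Lemma~\ref{lem:2handle}) starting at $F_{0}$ produces a $2$-handle bounded by $f$. Hence the image of $\pi_{0}(r)$ is the single class $0 \in \mathbb{Z}_{2}$.

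For uniqueness I would analyse the fiber $r^{-1}(f)$, whose components are the regular homotopy classes of $2$-handles with fixed boundary annulus $f$. Via the relative $h$-principle these correspond to homotopy classes, rel $A := S^{1} \times [0,1]$, of extensions of the boundary frame over $H := D^{2} \times [0,1]$ into $\mathrm{SO}(3)$. Two extensions are compared by the primary difference obstruction in $H^{k}(H, A; \pi_{k}(\mathrm{SO}(3)))$ over the relative cells of $(H,A)$, which occur only in dimensions $2$ and $3$. The dimension-$2$ contribution vanishes because $\pi_{2}(\mathrm{SO}(3)) \cong 0$, and the dimension-$3$ contribution vanishes because $H^{3}(H, A; \bbZ) \cong 0$: the long exact sequence of the pair, with $H \simeq *$ and $A \simeq S^{1}$, yields $H_{2}(H,A) \cong \bbZ$ and $H_{3}(H,A) \cong 0$. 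Thus the extension, and hence the $2$-handle, is unique up to regular homotopy fixing $f$; the governing vanishing is $\pi_{2}(\mathrm{SO}(3)) \cong 0$, exactly as in the sphere eversion.

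The step I expect to be the main obstacle is the precise matching of the two $h$-principle pictures under $r$, together with the equidimensional-with-boundary subtlety for $H$: one must justify that regular homotopy classes of immersions of the solid handle $D^{2} \times [0,1]$ rel its boundary annulus are computed by the frame-extension problem over $(H,A)$, which requires the relative immersion $h$-principle applied to the open interior with a collar argument rather than Hirsch's theorem for a closed equidimensional domain. Once this identification is in place, together with the verification that the fixed normal coincides with the transverse (radial) handle direction, the existence and uniqueness statements follow formally from $\pi_{1}(\mathrm{SO}(3)) \cong \mathbb{Z}_{2}$, $\pi_{2}(\mathrm{SO}(3)) \cong 0$, and $H^{3}(H,A) \cong 0$.
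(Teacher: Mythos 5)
Your framework (Smale--Hirsch plus $\pi_1(\mathrm{SO}(3))\cong\mathbb{Z}_2$ and $\pi_2(\mathrm{SO}(3))\cong 0$) is exactly the one the paper points to --- indeed the paper offers no proof at all, only the remark that the lemma is a well-known consequence of Smale--Hirsch theory --- and your $\pi_0$ computation and your obstruction-theoretic uniqueness argument are correct; you even handle a point the paper's one-line justification glosses over, namely that the potential $\pi_3(\mathrm{SO}(3))\cong\bbZ$ obstruction dies because $H^3(H,A)=0$.

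However, your ``only if'' step contains a genuine error. You read $dF$ as an $\mathrm{SO}(3)$-valued map on $D^2\times[0,1]$ and assert that its restriction to the boundary annulus is the adapted frame $(dF(\partial_r),df(\partial_\theta),df(\partial_t))$. For that to hold, the trivialization of $T(D^2\times[0,1])$ used to read $dF$ as a map into $\mathrm{SO}(3)$ would have to restrict, along $\partial D^2\times[0,1]$, to the polar frame $(\partial_r,\partial_\theta,\partial_t)$ --- and no such trivialization exists: the polar frame on $\partial D^2$ does not extend over $D^2$ (the classical index-one obstruction). In the Cartesian trivialization, $dF|_{S^1\times[0,1]}$ is indeed null-homotopic, but it differs from the adapted frame by the loop $\theta\mapsto R_z(\theta)$ (rotation by $\theta$ about the core axis), which is precisely the generator of $\pi_1(\mathrm{SO}(3))$. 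Concretely, for the standard embedded solid cylinder the adapted boundary frame is $\theta\mapsto R_z(\theta)\,C$ for a constant $C\in\mathrm{SO}(3)$, a noncontractible loop; so under your own identification (product trivialization of $T(S^1\times[0,1])$, Hirsch's theorem, free homotopy classes of loops) the boundary annulus $f_0$ of the standard handle has $[f_0]\neq 0$, contradicting your claim $[f_0]=0$. This is the three-dimensional analogue of Whitney's theorem that the boundary of an immersed disk in $\bbR^2$ has turning number $\pm1$, never $0$. The correct conclusion of your argument is that the bounding annuli form exactly one of the two regular homotopy classes, namely the class of the standard cylinder annulus, which is the \emph{nontrivial} one in your labeling. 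Your ``if'' direction and the uniqueness argument are phrased relative to $f_0$ and survive unchanged, as does the paper's application in Lemma~\ref{lem:subgroup} (which only uses the relative $\mathbb{Z}_2$ twist between an attaching annulus and a bounding reference annulus); but as written, your identification of the bounding component with $0\in\mathbb{Z}_2$ rests on a step that fails, and should be repaired either by the frame bookkeeping above or by declaring the isomorphism $\pi_0(\Imm(S^1\times[0,1],\bbR^3))\cong\mathbb{Z}_2$ to send the class of $f_0$ to $0$.
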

\begin{proof}
Theorem~\ref{thm:SH} implies that 
\begin{align*}
\pi_{0} (\Imm (S^{1} \times [0,1], \bbR^{3})) 
& \cong \pi_{0} (\Mon (T(S^{1} \times [0,1]), T\bbR^{3})) \\ 
& \cong \pi_{0} (C^{\infty} (S^{1}, V_{2}(\bbR^{3}))) \\
& \cong \pi_{1} (\mathrm{SO}(3)) \\
& \cong \bbZ_{2}, 
\end{align*}
where $V_{2}(\bbR^{3})$ is the space of 2-frames in $\bbR^{3}$ 
(the non-compact Stiefel manifold), 
and it is homotopy equivalent to $\mathrm{SO}(3)$. 
If an immersed annulus bounds an immersed 2-handle, 
it is regularly homotopic to a small embedded annulus which bounds an embedded 2-handle 
through the immersed 2-handle. 
Then the regular homotopy class corresponds to 
the nontrivial element of $\pi_{1}(\mathrm{SO}(3))$. 
The converse follows from Lemma~\ref{lem:2handle}.

Theorem~\ref{thm:SH} implies that 
\begin{align*}
\pi_{0} (\Imm (S^{2}, \bbR^{3})) 
& \cong \pi_{0} (\Mon (TS^{2}, T\bbR^{3})) \\ 
& \cong \pi_{0} (C^{\infty} (S^{2}, V_{2}(\bbR^{3}))) \\
& \cong \pi_{2} (\mathrm{SO}(3)) \\
& \cong 0. 
\end{align*}
Hence any two immersed spheres in $\bbR^{3}$ are regularly homotopic. 
In particular, the sphere eversion is possible. 
Suppose that an immersed annulus bounds two immersed 2-handles $H_{1}$ and $H_{2}$. 
By extending $H_{1}$ and $H_{2}$ in a common way, 
we obtain 2-handles $H^{\prime}_{1}$ and $H^{\prime}_{2}$ 
bounded by a common small embedded annulus. 
Let $D_{1}$ and $D_{2}$ be the core disks of $H^{\prime}_{1}$ and $H^{\prime}_{2}$. 
Two immersed spheres obtained by attaching a common small disk to $D_{1}$ and $D_{2}$ 
are regularly homotopic. 
Moreover, we may assume that the homotopy fixes the attached disk. 
Hence the 2-handles $H^{\prime}_{1}$ and $H^{\prime}_{2}$ are regularly homotopic 
fixing the boundary annulus. 
By Lemma~\ref{lem:SH-fibration}, 
this regular homotopy can be deformed to a regular homotopy 
between the 2-handles $H_{1}$ and $H_{2}$ fixing the boundary annulus. 
\end{proof}

Similarly, the following lemma for twists of a 1-handle holds 
(cf. the belt trick). 

\begin{lem}
\label{lem:1handle}
Fix two immersion $i_{0}$ and $i_{1}$ of a disk $D^{2}$ to $\bbR^{3}$. 
Let $\mathcal{I} (i_{0}, i_{1})$ denote the space consisting of the orientation-preserving immersions $f$ of a 1-handle $[0,1] \times D^{2}$ to $\bbR^{3}$ 
such that $f(0, z) = i_{0}(z)$ and $f(1, z) = i_{1}(z)$ for $z \in D^{2}$. 
Then $\pi_{0} (\mathcal{I} (i_{0}, i_{1})) \cong \pi_{1}(\mathrm{SO}(3)) \cong \bbZ_{2}$. 
In other words, there are exactly two regular homotopy classes of 
immersed 1-handles fixing the ends. 
Moreover, a continuous family of immersions from $\{0,1\} \times D^{2}$ to $\bbR^{3}$ 
extends to a continuous family of immersed 1-handles which starts from a given immersed 1-handle. 
\end{lem}
\begin{proof}
The last assertion follows from Lemma~\ref{lem:SH-fibration} 
in the same manner as in the proof of Lemma~\ref{lem:2handle}. 
Since any pair of immersed 1-handles in $\bbR^{3}$ is regulary homotopic to another pair, 
the set $\pi_{0} (\mathcal{I} (i_{0}, i_{1}))$ does not depend on the choice of $i_{0}$ and $i_{1}$. 
Hence we may assume that $i_{0} = i_{1}$. 

Let $I \subset D^{2}$ be a radius. 
The regular homotopy class of an element of $\mathcal{I} (i_{0}, i_{1})$ 
is determined by the regular homotopy class of its restriction to $[0,1] \times I$, 
which induces a map from $S^{1} \times I$. 
Since we can regularly homotope $i_{0} = i_{1}$ to any immersed disk while preserving the regular homotopy class of the corresponding element of $\Imm (S^{1} \times I, \bbR^{3})$, 
we have 
\[
\pi_{0} (\mathcal{I} (i_{0}, i_{1})) \cong \pi_{0} (\Imm (S^{1} \times I, \bbR^{3})) 
\cong \pi_{1} (\mathrm{SO}(3)) \cong \bbZ_{2}, 
\] 
where the second identification was shown in the proof of Lemma~\ref{lem:bound}. 
\end{proof}

We introduce the twist of a holed cone structure of $(X,\Sigma)$ 
with respect to a fixed holed cone metric with a suitable handle decomposition. 
Fix a holed cone metric $g_{0} \in \widetilde{\calHC}^{\mathrm{irr}}_{+} (X,\Sigma)$ 
and a handle decomposition $X_{0} \subset X_{1} \subset X_{2} \subset X_{3} = X$ for $g_{0}$ satisfying that 
\begin{itemize}
\item there is a single 0-handle, 
\item the neighborhood $N(\Sigma_{i}) \subset X_{0}$ of a cone locus $\Sigma_{i}$ is joined to the 0-handle by a single 1-handle, and 
\item the ends of the other 1-handles are attached to the 0-handle. 
\end{itemize}
Recall that $n$ is the number of cone loci. 
Let $m$ denote the number of 1-handles both of whose ends are attached to the 0-handle. 
The space $X_{1}$ induces $2n+m$ generators of the fundamental group $\Gamma$, 
and the 2-handles induce relations. 
Let $g \in \widetilde{\calHC}^{\mathrm{irr}}_{+} (X,\Sigma)$ such that 
$\widehat{\hol} ([g]) = \widehat{\hol} ([g_{0}]) \in \widehat{\calX}^{\mathrm{cone}}_{+} (X,\Sigma)$. 
We define 
$\Tw (g; g_{0}) = (x_{1}, \dots, x_{n}, y_{1}, \dots, y_{n}, z_{1}, \dots, z_{m}) 
\in \bbZ^{n} \times (\bbZ_{2}^{n} / \bbZ_{2}) \times \bbZ_{2}^{m}$, 
called the \emph{twist} of $g$ with respect to $g_{0}$, 
as follows.

Take a handle decomposition for $g$ 
which is topologically equivalent to the fixed one for $g_{0}$. 
Let $p \colon \widetilde{X_{2}} \to X_{2}$ denote the universal covering map. 
Let $\widetilde{X_{1}} = p^{-1}(X_{1})$, $\widetilde{X_{0}} = p^{-1}(X_{0})$, 
and $\widetilde{N(\Sigma_{i})} = p^{-1}(N(\Sigma_{i}))$. 
Consider the restriction of the developing maps $\dev_{g}$ and $\dev_{g_{0}}$ for $g$ and $g_{0}$ to the space $\widetilde{X_{1}}$. 
We continuously deform the metric $g$ preserving the holonomy representation, 
where the holed cone structure $[g]$ is not changed by Lemma~\ref{lem:homotopy}. 
Then we may assume that 
$\dev_{g}$ and $\dev_{g_{0}}$ coincide on the preimage of the 0-handle. 
Moreover, we may assume that 
the restrictions of $\dev_{g}$ and $\dev_{g_{0}}$ to $\widetilde{N(\Sigma_{i})}$ have a common image. 
However, these restrictions may not coincide. 
The developing maps induce a map $f_{i} \colon (N(\Sigma_{i}); g) \to (N(\Sigma_{i}); g_{0})$ for each $i$. 
The isotopy class of $f_{i}$ is determined by a framing around $\Sigma_{i}$. 
Fix a framing around $\Sigma_{i}$. 
Let $x_{i} \in \bbZ$ denote the twist number of $f_{i}$ measured 
by counting the rotation of the image of the framing by $f_{i}$. 
Note that the cone angles are positive. 
The number $x_{i}$ does not depend on the choice of a framing around $\Sigma_{i}$. 
The numbers $y_{i}, z_{j} \in \bbZ_{2}$ are defined as the regular homotopy classes of the restrictions to the developing maps to the 1-handles 
by Lemma~\ref{lem:1handle}, 
where they are equal to zero if $g = g_{0}$. 
However, a rotation of the 0-handle corresponding to the generator of $\pi_{1}(\mathrm{SO}(3)) \cong \bbZ_{2}$ 
changes the twist in the way that $(y_{1}, \dots, y_{n}) \mapsto (y_{1}+1, \dots, y_{n}+1)$. 
Hence we need to take the quotient by this $\bbZ_{2}$-action on $\bbZ_{2}^{n}$. 
Then the twist $\Tw (g; g_{0})$ is well-defined in $\bbZ^{n} \times (\bbZ_{2}^{n} / \bbZ_{2}) \times \bbZ_{2}^{m}$. 
If two holed cone metrics $g$ and $g'$ satisfy 
$[g] = [g'] \in \calHC^{\mathrm{irr}}_{+} (X,\Sigma)$ and $\widehat{\hol} ([g]) = \widehat{\hol} ([g^{\prime}]) = \widehat{\hol} ([g_{0}])$, 
then $\Tw (g; g_{0}) = \Tw (g'; g_{0})$. 
Clearly $\Tw (g_{0}; g_{0}) = 0$.

\begin{rem}
\label{rem:cuspframe}
If the cone angle at $\Sigma_{i}$ is equal to zero, 
there is no arbitrarity of the twist number $x_{i}$ of the framing around $\Sigma_{i}$. 
The twist of such a holed cone structure is defined by setting $x_{i} = 0$. 
\end{rem}

Conversely, the twist determines the holed cone structure. 

\begin{lem}
\label{lem:twist}
Let $g, g^{\prime}, g_{0} \in \widetilde{\calHC}^{\mathrm{irr}} (X,\Sigma)$ 
such that $\widehat{\hol} ([g]) = \widehat{\hol} ([g^{\prime}]) = \widehat{\hol} ([g_{0}])$. 
Suppose that $\Tw (g; g_{0}) = \Tw (g^{\prime}; g_{0})$ with respect to a fixed handle decomposition for $g_{0}$. 
Then $[g] = [g^{\prime}]$. 
\end{lem}
\begin{proof}
Fix handle decompositions for $g$ and $g^{\prime}$ 
which are topologically equivalent to that for $g_{0}$. 
We show that there is an equivariant regular homotopy 
between $\dev_{g}|_{\widetilde{X_{2}}}$ and $\dev_{g^{\prime}}|_{\widetilde{X_{2}}}$. 
This corresponds to a continuous deformation of the metrics $g$ to $g^{\prime}$. 
Then Lemma~\ref{lem:homotopy} implies that  $[g] = [g^{\prime}]$. 

Since $\Tw (g; g_{0}) = \Tw (g^{\prime}; g_{0})$, 
we may assume that $\dev_{g}|_{\widetilde{X_{0}}} = \dev_{g^{\prime}}|_{\widetilde{X_{0}}}$. 
Moreover, 
the restrictions of the developing maps to the 1-handles are regularly homotopic. 
This regular homotopy extends to the 2-handles by Lemma~\ref{lem:2handle}. 
Hence we may assume that $\dev_{g}|_{\widetilde{X_{1}}} = \dev_{g^{\prime}}|_{\widetilde{X_{1}}}$. 
Then the attaching annuli of 2-handles for $g$ and $g^{\prime}$ are the same. 
Lemma~\ref{lem:bound} implies that 
$\dev_{g}|_{\widetilde{X_{2}}}$ and $\dev_{g^{\prime}}|_{\widetilde{X_{2}}}$ are regularly homotopic. 
\end{proof}

We consider whether elements are realized as the twists of holed cone structures. 
Fix $g_{0} \in \widetilde{\calHC}^{\mathrm{irr}} (X,\Sigma)$ 
and a handle decomposition for $g_{0}$. 
Let $F(g_{0})$ denote the set consisting of $\Tw (g; g_{0})$ 
for $g$ with $\widehat{\hol} ([g]) = \widehat{\hol} ([g_{0}])$. 
Lemma~\ref{lem:twist} implies that 
the set $F(g_{0})$ is identified with the fiber of the map 
$\widehat{\hol} \colon \calHC^{\mathrm{irr}}(X,\Sigma) \to \widehat{\calX}^{\mathrm{cone}}(X,\Sigma)$ 
over $\widehat{\hol} (g_{0})$. 

\begin{lem}
\label{lem:subgroup}
The set $F(g_{0})$ is a subgroup of 
$\bbZ^{n} \times (\bbZ_{2}^{n} / \bbZ_{2}) \times \bbZ_{2}^{m} 
\cong \bbZ^{n} \times \bbZ_{2}^{n+m-1}$. 
Moreover, the group structure of $F(g_{0})$ 
does not depend on the choice of a handle decomposition for $g_{0}$. 
The group $F(g_{0})$ is infinite unless all the cone angles for $g_{0}$ are equal to zero. 
\end{lem}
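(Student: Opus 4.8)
The plan is to realize $F(g_{0})$ as the solution set of a system of $\bbZ_{2}$-linear conditions inside the ambient group, and then to read off all three assertions. Fix the handle decomposition for $g_{0}$ used to define the framing, with its single $0$-handle, the $2n+m$ generators coming from $X_{1}$, and the relations coming from the $2$-handles. First I would record that $\Fr(g_{0};g_{0})=0$, so $0\in F(g_{0})$. The key point is to describe $F(g_{0})$ as the image of a realizability map: a tuple $(x,y,z)\in\bbZ^{n}\times(\bbZ_{2}^{n}/\bbZ_{2})\times\bbZ_{2}^{m}$ lies in $F(g_{0})$ if and only if the equivariant immersion on $\widetilde{X_{1}}$ obtained from $\dev_{g_{0}}|_{X_{1}}$ by inserting $x_{i}$ full twists along the $i$-th cone neck and altering the regular homotopy classes on the $1$-handles by $y_{i},z_{j}$ (all rel the $0$-handle and rel the fixed holonomy) extends to an equivariant immersion on $\widetilde{X_{2}}$. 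Given such an extension, taking the holes to be $X\setminus X_{2}$, a union of balls disjoint from $\Sigma$, produces a holed cone metric $g$ with $\widehat{\hol}(g)=\widehat{\hol}(g_{0})$ and $\Fr(g;g_{0})=(x,y,z)$; this is where the freedom of holes is used, and Lemma~\ref{lem:homotopy} together with Lemma~\ref{lem:framing} guarantees that the normalized picture exhausts the fiber.

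Next I would show that this realizability condition is a subgroup condition. The modifications of $\dev_{g_{0}}|_{X_{1}}$ rel $X_{0}$ and rel holonomy, taken up to equivariant regular homotopy, form a group under fiberwise concatenation: twists along a cone neck add in $\bbZ$, and the regular homotopy classes on the $1$-handles add in $\bbZ_{2}$ by Lemma~\ref{lem:1handle}, the quotient by the diagonal $\bbZ_{2}$ accounting for a full rotation of the $0$-handle. This identifies the ambient group with $\bbZ^{n}\times(\bbZ_{2}^{n}/\bbZ_{2})\times\bbZ_{2}^{m}$. Whether the result extends over a given $2$-handle is governed by Lemma~\ref{lem:bound}: the developed attaching annulus bounds an immersed $2$-handle precisely when its class in $\pi_{0}(\Imm(S^{1}\times[0,1],\bbR^{3}))\cong\bbZ_{2}$ vanishes, and this class is the sum of the $\bbZ_{2}$-contributions of the $1$-handles traversed together with the parities of the twists at the cone necks met along the attaching curve. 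Each $2$-handle thus imposes one $\bbZ_{2}$-linear equation on the reductions of $(x,y,z)$; since $g_{0}$ itself (framing $0$) extends, the common solution set is a subgroup of $\bbZ^{n}\times(\bbZ_{2}^{n}/\bbZ_{2})\times\bbZ_{2}^{m}\cong\bbZ^{n}\times\bbZ_{2}^{n+m-1}$. Via the realizability map this subgroup is exactly $F(g_{0})$, the uniqueness of the $2$-handle extension from Lemma~\ref{lem:2handle} and Lemma~\ref{lem:bound} ensuring consistency with Lemma~\ref{lem:framing}.

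For the infinitude statement, suppose the cone angle $\theta_{i}$ at $\Sigma_{i}$ is positive. Inserting two full twists along the $i$-th cone neck preserves the holonomy and the cone angles, hence stays in the fiber, and changes the framing only by $2e_{i}$ in the $i$-th $\bbZ$-coordinate; being an even number of $2\pi$-rotations it is trivial in $\pi_{1}(\mathrm{SO}(3))\cong\bbZ_{2}$, so it alters none of the $2$-handle classes and the modified immersion still extends over $X_{2}$. Hence $2e_{i}\in F(g_{0})$, and since $F(g_{0})$ is a subgroup it contains the infinite subgroup $2\bbZ\, e_{i}$. Conversely, if every cone angle is zero then Remark~\ref{rem:cuspframe} forces $x_{i}=0$ for all $i$, so $F(g_{0})\subseteq\bbZ_{2}^{n+m-1}$ is finite.

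It remains to see that the group structure is independent of the chosen handle decomposition for $g_{0}$. By Lemma~\ref{lem:framing} the set $F(g_{0})$ is, independently of the decomposition, in canonical bijection with the fiber $\widehat{\hol}^{-1}(\widehat{\hol}(g_{0}))$, so the content is that the addition transported to this fiber agrees for two decompositions $H,H'$ of the prescribed type. I would argue this by connecting $H$ and $H'$ through elementary handle moves (handle slides and the creation or cancellation of complementary handle pairs) and checking that each move induces a group isomorphism of the two ambient framing groups carrying the subgroup $F(g_{0})$ for $H$ onto that for $H'$ and intertwining the two framing maps on the fiber. The main obstacle is exactly this last step: bookkeeping how the integer twist numbers and the $\bbZ_{2}$-classes transform under a handle slide, and confirming that the $2$-handle relations transform compatibly, so that the induced operations on the intrinsic fiber coincide. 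Equivalently, one may seek an intrinsic description of the sum $[g]+[g']$ as a concatenation of the developing-map differences relative to $g_{0}$, making its independence of $H$ manifest; verifying that such a concatenation is well defined and agrees with the framing addition is the same difficulty in another guise.
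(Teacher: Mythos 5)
Your treatment of the first and third assertions is sound and essentially coincides with the paper's: you realize $F(g_{0})$ as the kernel of a $\bbZ_{2}$-linear obstruction homomorphism $\bbZ^{n} \times (\bbZ_{2}^{n}/\bbZ_{2}) \times \bbZ_{2}^{m} \to \bbZ_{2}^{r}$, one coordinate per 2-handle via Lemma~\ref{lem:bound}, and your infinitude argument (that $2e_{i} \in F(g_{0})$ whenever $\theta_{i} > 0$, since an even number of full twists is trivial in $\pi_{1}(\mathrm{SO}(3))$) is if anything more careful than the paper's, which assumes all cone angles positive ``for simplicity.'' One small point you gloss over: for the obstruction map to descend to the quotient $\bbZ_{2}^{n}/\bbZ_{2}$ one needs that the attaching annulus of each 2-handle crosses each cone-locus 1-handle an even number of times; the paper records this as the statement that the intersection numbers $b_{ki}$ are even, which is also what makes the $y$-contributions drop out of the obstruction entirely.

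The genuine gap is the second assertion, independence of the handle decomposition, and you flag it yourself: you propose to connect two decompositions by handle moves and check invariance move by move, but you describe the required bookkeeping as ``the main obstacle'' and do not carry it out. Naming the key verification as an obstacle is not a proof of it. The paper closes exactly this gap by first reducing, via the handle-calculus theorem cited from \cite{kirby1989topology}, to three specific moves --- (i) slides of 1- and 2-handles, (ii) addition of a 2-handle along a null-homotopic annulus, and (iii) stabilization by a complementary 1-/2-handle pair --- and then checking each one: (i) only permutes and re-coordinatizes the components of the framing group, hence induces an isomorphism of ambient groups carrying one kernel to the other; (ii) leaves the obstruction map $f$ unchanged; and (iii) replaces $f$ by $f^{\prime}(\bm{x}, z_{m+1}) = (f(\bm{x}), z_{m+1})$, whose kernel is canonically isomorphic to $\ker f$. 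Each check is short once the moves are isolated; it is precisely this reduction to a finite list of moves, and the explicit transformation of $f$ under each, that your sketch is missing, so the second assertion remains unproved in your write-up.
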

\begin{proof}
For simplicity, we suppose that $g_{0} \in \widetilde{\calHC}^{\mathrm{irr}}_{+} (X,\Sigma)$. 
Any element in $\bbZ^{n} \times (\bbZ_{2}^{n} / \bbZ_{2}) \times \bbZ_{2}^{m}$ 
is realized as a twist by a metric on the space $X_{1}$. 
However, this metric is not extended to the 2-handles in general. 
The obstruction is given by Lemma~\ref{lem:bound}.

Let $D_{1}, \dots, D_{r}$ denote the 2-handles. 
We define a map 
$f \colon \bbZ^{n} \times (\bbZ_{2}^{n} / \bbZ_{2}) \times \bbZ_{2}^{m} \to \bbZ_{2}^{r}$ 
as follows. 
For $1 \leq i \leq n$, $1 \leq j \leq m$, and $1 \leq k \leq r$, 
let $a_{ki}$, $b_{ki}$, $c_{kj}$ denote 
the intersection numbers of the attaching annulus of $D_{k}$ 
and the meridians of $N(\Sigma_{i})$ or 1-handles. 
Then $b_{ki}$ is an even number. 
For $\bm{x} = (x_{1}, \dots, x_{n}, y_{1}, \dots, y_{n}, z_{1}, \dots, z_{m}) 
\in \bbZ^{n} \times (\bbZ_{2}^{n} / \bbZ_{2}) \times \bbZ_{2}^{m}$, 
we define $u_{k} \in \bbZ_{2}$ as 
$u_{k} \equiv \sum_{i} a_{ki}x_{i} + \sum_{i} b_{ki}y_{i} + \sum_{j} c_{kj}z_{j} \equiv  \sum_{i} a_{ki}x_{i} + \sum_{j} c_{kj}z_{j}$. 
The homomorphism $f$ is defined by $f(\bm{x}) = (u_{1}, \dots, u_{r})$. 
Since the element $u_{k}$ represents the twist of the attaching annulus of $D_{k}$ 
measured from that for $g_{0}$, 
it is the obstruction to extend the metric to $D_{k}$. 
Therefore $F(g_{0})$ is the kernel of $f$. 
Thus it is an infinite subgroup of $\bbZ^{n} \times (\bbZ_{2}^{n} / \bbZ_{2}) \times \bbZ_{2}^{m}$.

The set $F(g_{0})$ is the fiber of $\widehat{\hol}$, 
which is determined as a set. 
Take another handle decomposition 
$X_{0} \subset X'_{1} \subset X'_{2} \subset X_{3} = X$ for $g_{0}$. 
We may assume that $X_{0}$ is the same as above. 
However, 1-handles in $X'_{1}$ may not be homotopic to those in $X_{1}$. 
Similarly to the case for ordinary handle decompositions 
(see \cite[Theorem 1.1]{Kirby89}), 
two handle decompositions 
are connected with a sequence of the following moves and their inverses: 
\begin{itemize}
\item slide of 1-handles and 2-handles, 
\item addition of a 2-handle along a null-homotopic annulus, and 
\item stabilization adding a 1-handle and a 2-handle. 
\end{itemize}
Since slides of handles merely exchange the component of the twist, 
they do not change the group structure of $F(g_{0})$. 
Any addition of a 2-handle does not affect the map $f$. 
Hence we may assume that 
the new handle decomposition is obtained by stabilization adding a 1-handle and a 2-handle. 
Then the obstruction map 
$f' \colon \bbZ^{n} \times (\bbZ_{2}^{n} / \bbZ_{2}) \times \bbZ_{2}^{m+1} \to \bbZ_{2}^{r+1}$ 
for the new handle decomposition is obtained by 
$f' (\bm{x}, z_{m+1}) 
= (f(\bm{x}), z_{m+1})$, 
where $\bm{x} = (x_{1}, \dots, x_{n}, y_{1}, \dots, y_{n}, z_{1}, \dots, z_{m})$. 
Therefore the group structures of $F(g_{0})$ for the two handle decomposition coincide. 
\end{proof}

Since the set of twists is discrete, 
the twist invariant is constant under continuous deformation as shown in the following lemma. 

\begin{lem}
\label{lem:twdeform}
Fix a handle decomposition of $(X, \Sigma)$. 
Let $g_{t}, g'_{t} \in \widetilde{\calHC}^{\mathrm{irr}} (X,\Sigma)$ for $0 \leq t \leq 1$ 
be continuous families of metrics on $X_{2}$ 
such that $\widehat{\hol} ([g_{t}]) = \widehat{\hol} ([g'_{t}])$. 
Then $\Tw (g'_{t}; g_{t})$ are constant for $0 \leq t \leq 1$. 
\end{lem}

We can deform the structures along representations. 
Note that the existence of small deformation is a general result 
(see \cite[Theorem 6.7]{BLP05}, \cite[Lemma I.1.7.2]{CEG87}, or \cite[Theorem 5.3]{CHK00} for details).

\begin{lem}
\label{lem:lift}
Let $[g_{0}] \in \calHC^{\mathrm{irr}}_{+} (X,\Sigma)$. 
Let $\hat{\rho}_{t} \in \widehat{\calX}^{\mathrm{cone}}_{+} (X,\Sigma)$ for $0 \leq t \leq 1$ 
be a continuous family of representations with positive cone angles 
such that $\widehat{\hol} ([g_{0}]) = \hat{\rho}_{0}$. 
Then there exists a unique continuous family of holed cone structures 
$[g_{t}] \in \calHC^{\mathrm{irr}}_{+} (X,\Sigma)$ starting from $[g_{0}]$ 
such that $\widehat{\hol} ([g_{t}]) = \hat{\rho}_{t}$. 
\end{lem}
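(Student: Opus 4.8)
The plan is to show that $\widehat{\hol}$ has the \emph{unique path-lifting property} over the positive–cone-angle locus, and then to produce the global lift on $[0,1]$ by a connectedness argument. The two essential ingredients are a \emph{local existence} statement (a lift exists over a small subinterval through any prescribed initial holed cone structure) and a \emph{local uniqueness} statement (two local lifts agreeing at one parameter agree nearby). Both are then globalized in the standard open-closed manner.

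For local existence near a parameter $s$, I would start from a representative holed cone metric $g_{s}$ of $[g_{s}]$ together with a handle decomposition $X_{0} \subset X_{1} \subset X_{2}$ adapted to $g_{s}$ as in the construction of the framing invariant. First lift the path $\hat{\rho}_{t}$ to a continuous family $\rho_{t} \in \Hom^{\mathrm{irr}} (\Gamma, G)$ with $\rho_{s} = \rho_{g_{s}}$; this is possible for $t$ near $s$ because over the irreducible locus the orbit map $t \colon \Hom^{\mathrm{irr}} (\Gamma, G) \to \calX^{\mathrm{irr}} (\Gamma)$ admits continuous local sections. I would then deform $\dev_{g_{s}}$ to an equivariant immersion $\dev_{t}$ for $\rho_{t}$ handle by handle: on each $N(\Sigma_{i})$ the developing image is governed by the loxodromic axis of $\rho_{t}(\lambda_{i})$ and the rotation $\rho_{t}(\mu_{i})$ of the positive cone angle $\theta_{i}(t)$, which vary continuously; on the $0$-handle the chart is moved by the varying holonomy; over the $1$-handles the immersion extends by Lemma~\ref{lem:1handle}; and over the $2$-handles the developing images of the attaching annuli trace a regular homotopy $f_{t}$ starting from $f_{s}$, which bounds the immersed $2$-handles given by $\dev_{g_{s}}$, so Lemma~\ref{lem:2handle} supplies a continuous family of immersed $2$-handles bounding $f_{t}$. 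The resulting equivariant immersions descend to a continuous family of holed cone metrics on $X_{2}$ with $\widehat{\hol} (g_{t}) = \hat{\rho}_{t}$, giving a local lift through $[g_{s}]$.

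For local uniqueness, suppose $g_{t}$ and $g_{t}'$ are two continuous families of metrics on a common $X_{2}$ with $\widehat{\hol} (g_{t}) = \hat{\rho}_{t} = \widehat{\hol} (g_{t}')$ and $[g_{s}] = [g_{s}']$. Lemma~\ref{lem:frdeform} shows that $\Fr (g_{t}'; g_{t})$ is constant in $t$; since $[g_{s}] = [g_{s}']$ gives $\Fr (g_{s}'; g_{s}) = 0 = \Fr (g_{s}; g_{s})$, the framings agree for all $t$, and Lemma~\ref{lem:framing} (with reference $g_{t}$) yields $[g_{t}] = [g_{t}']$. (Positivity of the cone angles is what makes the twist numbers $x_{i}$ of the framing well defined; cf. Remark~\ref{rem:cuspframe}.) To globalize, let $I$ be the set of $s$ for which a lift of $\hat{\rho}_{t}|_{[0,s]}$ starting at $[g_{0}]$ exists; it is nonempty ($0 \in I$) and downward closed, so $I = [0, \sigma)$ or $[0, \sigma]$ with $\sigma = \sup I$. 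Local existence applied at an interior parameter, combined with local uniqueness, shows the lift extends continuously to $t = \sigma$ and, were $\sigma < 1$, past $\sigma$, contradicting the definition of $\sigma$; hence $\sigma = 1 \in I$. Uniqueness of the global lift follows from local uniqueness by the same argument.

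The main obstacle is the local existence step, namely producing the deformed developing map as a genuine holed cone metric realizing the prescribed cone angles. The delicate points are the continuous deformation of the cone-locus charts compatibly with the varying rotation angle $\theta_{i}(t) > 0$ and, above all, the extension over the $2$-handles, where one must know that the deformed attaching annuli continue to bound immersed $2$-handles. This is exactly guaranteed by Lemma~\ref{lem:2handle}, since the starting annuli $f_{s}$ bound the $2$-handles coming from $g_{s}$; once this is in place the remaining gluing over the $0$- and $1$-handles is routine, and the connectedness and uniqueness arguments are formal consequences of Lemmas~\ref{lem:homotopy}, \ref{lem:framing}, and~\ref{lem:frdeform}.
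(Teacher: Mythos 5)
Your proposal is correct and follows essentially the same route as the paper: take a handle decomposition for $g_{0}$, use Lemma~\ref{lem:1handle} to deform the metric over $X_{1}$ along the prescribed representations and Lemma~\ref{lem:2handle} to extend across the $2$-handles, and deduce uniqueness from Lemmas~\ref{lem:framing} and~\ref{lem:frdeform}. The only difference is packaging: since Lemmas~\ref{lem:1handle} and~\ref{lem:2handle} are already path-lifting statements over the whole parameter interval, the paper constructs the lift globally in one step, so your local-existence-plus-open-closed globalization (and the explicit local lifting of the path from $\calX^{\mathrm{irr}}(\Gamma)$ to $\Hom^{\mathrm{irr}}(\Gamma, G)$, which the paper leaves implicit) is a more careful but not essentially different elaboration.
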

\begin{proof}
Take a handle decomposition for $g_{0}$. 
Lemma~\ref{lem:1handle} implies that 
there is a continuous deformation of metrics $g_{t}$ on $X_{1}$ 
such that $\widehat{\hol} ([g_{t}]) = \hat{\rho}_{t}$. 
Note that the cone angles are compatible with the representations 
by definition of $\widehat{\calX}^{\mathrm{cone}}$. 
The developing maps induce a continuous family of the attaching annuli of the 2-handles. 
Then Lemma~\ref{lem:2handle} implies that the metrics extend to the 2-handles. 
Thus we obtain a continuous family of holed cone metrics $g_{t} \in \widetilde{\calHC}^{\mathrm{irr}} (X,\Sigma)$ 
such that $\widehat{\hol} ([g_{t}]) = \hat{\rho}_{t}$. 
The uniqueness of the structures $[g_{t}] \in \calHC^{\mathrm{irr}} (X,\Sigma)$ 
follows from Lemmas~\ref{lem:twist} and \ref{lem:twdeform}. 
\end{proof}

\begin{rem}
\label{rem:cusp}
The assertion of Lemma~\ref{lem:lift} holds 
even if $[g_{0}] \in \calHC^{\mathrm{irr}} (X,\Sigma)$, 
the cone angle at $\Sigma_{i}$ for $[g_{0}]$ is equal to zero, 
and $\hat{\rho}_{t} \in \widehat{\calX}^{\mathrm{cone}}_{+}$ for $0 < t \leq 1$. 
In other words, 
there is a deformation from a cusp to a cone locus along prescribed representations. 
However, there may not be a deformation from a cone locus to a cusp. 
Recall Remark~\ref{rem:cuspframe}. 
Let $[g_{t}] \in \calHC^{\mathrm{irr}} (X,\Sigma)$ for $0 \leq t \leq 1$ 
be a continuous family of holed cone structures. 
Suppose that $[g_{t}] \in \calHC^{\mathrm{irr}}_{+} (X,\Sigma)$ for $0 \leq t < 1$, 
and the cone angle at $\Sigma_{i}$ for $[g_{1}]$ is equal to zero. 
Let $[g^{\prime}_{t}] \in \calHC^{\mathrm{irr}}_{+} (X,\Sigma)$ for $0 \leq t < 1$ 
such that $\widehat{\hol} ([g^{\prime}_{0}]) = \widehat{\hol} ([g_{0}])$. 
Suppose that $\Tw (g^{\prime}_{t}; g_{t})$ has a non-zero entry at $x_{i}$. 
Then $[g^{\prime}_{t}]$ do not converge at $t=1$. 
For $t$ near 1, the metric $g^{\prime}_{t}$ is highly twisted around $\Sigma_{i}$ 
with respect to $g_{t}$. 
\end{rem}

Finally, we obtain the main theorems. 

\begin{thm}
\label{thm:cover}
The map 
$\widehat{\hol} \colon \calHC^{\mathrm{irr}}_{+} (X,\Sigma) \to \widehat{\calX}^{\mathrm{cone}}_{+} (X,\Sigma)$ 
is a regular covering map to each path-connected component 
of $\widehat{\calX}^{\mathrm{cone}}_{+} (X,\Sigma)$ that contains an image. 
\end{thm}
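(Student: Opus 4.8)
The plan is to fix a path-connected component $C$ of $\widehat{\calX}^{\mathrm{cone}}_{+}(X,\Sigma)$ meeting the image and to verify, over $C$, the three ingredients of a regular covering: discrete fibers, the even-covering (local triviality) property, and a deck group acting transitively on each fiber. First I would record that the fibers are discrete. By Lemma~\ref{lem:framing} a holed cone structure over a fixed value $\widehat{\hol}(g_{0})$ is determined by its framing, and by Lemma~\ref{lem:subgroup} the set $F(g_{0})$ of realized framings is a subgroup of the discrete group $\bbZ^{n}\times(\bbZ_{2}^{n}/\bbZ_{2})\times\bbZ_{2}^{m}\cong\bbZ^{n}\times\bbZ_{2}^{n+m-1}$; hence each fiber, identified with $F(g_{0})$ via $[g]\mapsto\Fr(g;g_{0})$, is discrete.

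The core of the argument is to produce evenly-covered neighborhoods from the path-lifting of Lemma~\ref{lem:lift}. Given $\hat\rho_{0}\in C$ in the image, Lemma~\ref{lem:contract} supplies a contractible neighborhood $U\subset C$. For each $[g_{0}]$ in the fiber over $\hat\rho_{0}$ I would define a section $s_{[g_{0}]}\colon U\to\calHC^{\mathrm{irr}}_{+}(X,\Sigma)$ by choosing, for each $\hat\rho\in U$, a path in $U$ from $\hat\rho_{0}$ to $\hat\rho$ and lifting it from $[g_{0}]$ by Lemma~\ref{lem:lift}, taking $s_{[g_{0}]}(\hat\rho)$ to be the endpoint. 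Because $U$ is contractible, any two such paths are homotopic rel endpoints, so the section is well defined once the lift is known to depend only on the homotopy class of the path; the same homotopy-invariance yields continuity of $s_{[g_{0}]}$. Unique path lifting gives $\widehat{\hol}\circ s_{[g_{0}]}=\mathrm{id}_{U}$ and $s_{[g_{0}]}\circ\widehat{\hol}=\mathrm{id}$ on $s_{[g_{0}]}(U)$, and forces the images to be disjoint for distinct starting points. Thus $\widehat{\hol}^{-1}(U)=\bigsqcup_{[g_{0}]}s_{[g_{0}]}(U)$ is a disjoint union of open sets each carried homeomorphically onto $U$, so $\widehat{\hol}$ is a covering over $C$.

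For regularity I would exhibit a deck group acting transitively on the fibers. Fix a handle decomposition. For a framing vector $\bm{v}\in F(g_{0})$, re-gluing the handles with the additional twist prescribed by $\bm{v}$ carries a holed cone metric $g$ with $\Fr(g;g_{0})=\bm{w}$ to one with framing $\bm{w}+\bm{v}$; by the very definition of $F(g_{0})$ this operation fixes $\widehat{\hol}$, so it induces a fiber-preserving self-homeomorphism of $\widehat{\hol}^{-1}(C)$, that is, a deck transformation. The group law of Lemma~\ref{lem:subgroup}---independent of the handle decomposition and, by Lemma~\ref{lem:frdeform}, consistent along $C$---makes $F(g_{0})$ act simply transitively on each fiber. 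A deck group acting transitively on fibers certifies that the covering is regular.

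The hard part will be the homotopy-invariance of the lift required in the second paragraph: Lemma~\ref{lem:lift} asserts only the lifting of a single path, whereas well-definedness and continuity of the sections $s_{[g_{0}]}$ demand lifting two-parameter families and knowing that the endpoint of the lift is locally constant as the path varies in its homotopy class. I expect to secure this either by running the handle-by-handle construction in the proof of Lemma~\ref{lem:lift} over a square instead of an interval---using Lemmas~\ref{lem:1handle} and \ref{lem:2handle} to extend families of immersions over the $1$- and $2$-handles---or, more economically, by combining Lemma~\ref{lem:frdeform} with the discreteness of the fibers: along a lifted homotopy $g_{t}$ the framing $\Fr(\,\cdot\,;g_{t})$ takes values in the discrete group yet varies continuously, hence is constant, which forces the two candidate lift endpoints to coincide.
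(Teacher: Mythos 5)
Your proposal is correct and follows essentially the same route as the paper's proof: contractible neighborhoods from Lemma~\ref{lem:contract}, sections built by the unique path lifting of Lemma~\ref{lem:lift}, discreteness of fibers via Lemmas~\ref{lem:framing} and \ref{lem:subgroup}, and the $F(g_{0})$-action (via Lemma~\ref{lem:frdeform}) supplying the deck group for regularity. The ``hard part'' you flag --- well-definedness of the section across homotopic paths --- is exactly what the paper leaves implicit in the phrase ``unique lifting property,'' and your second resolution (constancy of the framing along continuous families plus discreteness of the fiber) is the intended mechanism, so this is an elaboration rather than a different argument.
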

\begin{proof}
For any $[g_{0}] \in \calHC^{\mathrm{irr}}_{+} (X,\Sigma)$, 
we can take a contractible neighborhood $U$ of $\widehat{\hol} ([g_{0}])$ 
in $\widehat{\calX}^{\mathrm{cone}}_{+} (X,\Sigma)$ 
by Lemma~\ref{lem:contract}. 
The unique lifting property by Lemma~\ref{lem:lift} 
implies that there is a section of $\widehat{\hol}$ on $U$ containing $[g_{0}]$. 
Moreover, Each fiber of $\widehat{\hol}$ is discrete by Lemma~\ref{lem:subgroup}. 
Hence the image $U_{0}$ of this section is a neighborhood of $[g_{0}]$. 
The group $F(g_{0})$ acts on $\widehat{\hol}^{-1} (U)$ by Lemma~\ref{lem:twdeform}. 
For $\varphi \in F(g_{0})$, let $U_{\varphi} = \varphi \cdot U_{0}$ by this action. 
Then $\widehat{\hol}^{-1} (U)$ is the disjoint union $\bigsqcup_{\varphi \in F(g_{0})} U_{\varphi}$. 
Hence the map $\widehat{\hol}$ is a regular covering map to each path-connected component 
of $\widehat{\calX}^{\mathrm{cone}}_{+} (X,\Sigma)$ containing an image. 
\end{proof}

\begin{thm}
\label{thm:hausdorff}
The deformation space $\calHC^{\mathrm{irr}} (X,\Sigma)$ is Hausdorff. 
\end{thm}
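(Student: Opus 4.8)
The plan is to separate two distinct holed cone structures $[g] \neq [g']$ by combining the map $\widehat{\hol}$ with the framing invariant. The first point is that the target is already Hausdorff: the space $\widehat{\calX}^{\mathrm{cone}}(X,\Sigma)$ sits inside $\calX^{\mathrm{irr}}(\Gamma) \times [0,\infty)^{n}$, and $\calX^{\mathrm{irr}}(\Gamma)$ is Hausdorff while $[0,\infty)^{n}$ is Hausdorff, so the product and hence the subspace are Hausdorff. Consequently, if $\widehat{\hol}([g]) \neq \widehat{\hol}([g'])$ — which in particular covers the case where the cone angles $\Theta(g)$ and $\Theta(g')$ differ, for instance one structure having a cusp that the other lacks — I would choose disjoint open neighborhoods of the two images and pull them back by the continuous map $\widehat{\hol}$ to get disjoint open neighborhoods of $[g]$ and $[g']$. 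So the whole difficulty is concentrated in the fiber case $\widehat{\hol}([g]) = \widehat{\hol}([g']) =: \hat{\rho}$.

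When $\hat{\rho}$ has all cone angles positive, we have $[g], [g'] \in \calHC^{\mathrm{irr}}_{+}(X,\Sigma)$, which is open in $\calHC^{\mathrm{irr}}(X,\Sigma)$ since it is the preimage of the open set $(0,\infty)^{n}$ under the continuous angle map. Here Theorem~\ref{thm:cover} presents $\widehat{\hol}$ as a regular covering onto a path-component of $\widehat{\calX}^{\mathrm{cone}}_{+}(X,\Sigma)$, and the evenly covered neighborhood $U$ of $\hat{\rho}$ furnished there gives $\widehat{\hol}^{-1}(U) = \bigsqcup_{\varphi \in F(g)} U_{\varphi}$. Since $F(g)$ acts simply transitively on the fiber, $[g] \in U_{\varphi_{1}}$ and $[g'] \in U_{\varphi_{2}}$ with $\varphi_{1} \neq \varphi_{2}$ (as $[g] \neq [g']$), so the sheets $U_{\varphi_{1}}, U_{\varphi_{2}}$ are the required disjoint open sets in the full space.

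The delicate case is when some cone angle of $\hat{\rho}$ vanishes, so $[g]$ and $[g']$ carry cusps and the covering picture is unavailable. Here I would argue directly with the relative framing. Fix representatives $g, g'$ and a common handle decomposition; by Lemma~\ref{lem:framing}, the fact that $[g] \neq [g']$ while $\widehat{\hol}(g) = \widehat{\hol}(g')$ forces $\Fr(g'; g) \neq 0$. The idea is to treat the relative framing as a locally constant function $\Phi$ on the fiber product $\{([h],[h']) : \widehat{\hol}(h) = \widehat{\hol}(h')\}$, which vanishes on the diagonal and equals $\Fr(g'; g)$ at $([g],[g'])$: local constancy follows from Lemma~\ref{lem:frdeform}, since a continuous deformation in each factor preserving the holonomy leaves $\Fr(\cdot\,;\cdot)$ unchanged and the framing takes values in the discrete group $\bbZ^{n} \times (\bbZ_{2}^{n}/\bbZ_{2}) \times \bbZ_{2}^{m}$. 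Choosing a neighborhood of $([g],[g'])$ in the fiber product on which $\Phi \equiv \Fr(g';g)$ and shrinking it to a product $V \times V'$, any common point $[h] \in V \cap V'$ would give $([h],[h])$ in this neighborhood with $\Phi([h],[h]) = 0 \neq \Fr(g';g)$, a contradiction; hence $V \cap V' = \emptyset$.

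The hard part will be this last step at cusps: converting the local constancy of the framing into honest disjoint \emph{open} sets for the quotient topology of $\calHC^{\mathrm{irr}}(X,\Sigma)$. Because $\calHC^{\mathrm{irr}}$ carries the quotient topology from $\widetilde{\calHC}^{\mathrm{irr}}$, convergence of structures need not lift to convergence of metrics, and near a cusp the lifting of deformations is only one-directional (Remark~\ref{rem:cusp}), so Lemma~\ref{lem:lift} cannot be used to build local sections as in the positive-angle case. I expect the crux to be verifying that $\Phi$ is continuous for the quotient topology — equivalently, that the framing is stable under exactly the $C^{\infty}$-small perturbations that survive passage to the quotient — and that the neighborhoods $V, V'$ can be taken saturated, so that they descend to genuine open sets separating $[g]$ and $[g']$.
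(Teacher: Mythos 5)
Your reduction to three cases is sound, and the first two are handled correctly: $\widehat{\calX}^{\mathrm{cone}}(X,\Sigma)$ is Hausdorff as a subspace of $\calX^{\mathrm{irr}}(\Gamma) \times [0,\infty)^{n}$, so distinct $\widehat{\hol}$-images are separated by pulling back disjoint neighborhoods; and when the common image has all cone angles positive, the sheets of the evenly covered neighborhood furnished by Theorem~\ref{thm:cover} separate distinct points of a fiber (the standard argument that a covering space of a Hausdorff space is Hausdorff). The genuine gap is exactly where you locate it: the cusped case. Your plan needs the relative framing $\Phi$ to be locally constant on the fiber product \emph{for the quotient topology}, but the only available tool, Lemma~\ref{lem:frdeform}, gives constancy along continuous families of \emph{metrics}. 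To convert that into local constancy near a cusped structure you would need to relate nearby points of the quotient by such families, i.e.\ a local path-lifting statement at zero cone angles---and Remark~\ref{rem:cusp} shows that lifting fails in one direction precisely there (families twisted around a cusped $\Sigma_{i}$ diverge as the angle tends to $0$). Nor is it clear that the saturated product neighborhoods $V \times V'$ you need exist, since nothing established so far controls the quotient topology of $\calHC^{\mathrm{irr}}(X,\Sigma)$ at cusped points. So the proof is incomplete at its acknowledged crux.

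The paper resolves this case by a different device: instead of separating points directly at cusps, it uses the argument of Remark~\ref{rem:cusp} (one \emph{can} deform from a cusp into positive cone angles along prescribed representations, by the cusped version of Lemma~\ref{lem:lift}) to build an enlarged space $\overline{\calHC}^{\mathrm{irr}}(X,\Sigma)$---roughly, adjoining the twisted structures that fail to converge at cusps---on which $\widehat{\hol}$ extends to a regular covering map over the relevant components of the full space $\widehat{\calX}^{\mathrm{cone}}(X,\Sigma)$, zero angles included. A covering space of a Hausdorff base is Hausdorff, and $\calHC^{\mathrm{irr}}(X,\Sigma)$ sits inside $\overline{\calHC}^{\mathrm{irr}}(X,\Sigma)$ as an open subspace, hence is Hausdorff. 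In other words, the paper reduces the cusped case to the same covering-space argument you use for positive angles, by completing the deformation space rather than analyzing the quotient topology at cusps; that completion is the idea missing from your proposal.
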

\begin{proof}
By the argument in Remark~\ref{rem:cusp}, 
there is a space $\overline{\calHC}^{\mathrm{irr}} (X,\Sigma)$ 
such that the map $\widehat{\hol} \colon \calHC^{\mathrm{irr}}_{+} (X,\Sigma) \to \widehat{\calX}^{\mathrm{cone}} (X,\Sigma)$ 
extends to a map $\widehat{\hol} \colon \overline{\calHC}^{\mathrm{irr}} (X,\Sigma) \to \widehat{\calX}^{\mathrm{cone}} (X,\Sigma)$, 
which is a regular covering map to each path-connected component 
of $\widehat{\calX}^{\mathrm{cone}}_{+} (X,\Sigma)$ that contains an image. 
The space $\calHC^{\mathrm{irr}} (X,\Sigma)$ is 
an open subset of $\overline{\calHC}^{\mathrm{irr}} (X,\Sigma)$, 
which is Hausdorff. 
\end{proof}


\begin{ques}
\label{ques:product}
Is $\widehat{\hol} \colon \calHC^{\mathrm{irr}}_{+} (X,\Sigma) \to \widehat{\calX}^{\mathrm{cone}}_{+} (X,\Sigma)$ 
a product covering? 
\end{ques}

The map $\widehat{\hol}$ is not a product covering in general. 
For instance, 
let $(X, \Sigma; g)$ and $(X^{\prime}, \Sigma^{\prime}; g^{\prime})$ 
be holed cone-manifolds. 
By making new holes in $(X, \Sigma)$ and $(X^{\prime}, \Sigma^{\prime})$ 
and attaching a 1-handle, 
we obtain a holed cone metric $g_{0}$ 
on the connected sum $(X \# X^{\prime}, \Sigma \sqcup \Sigma^{\prime})$. 
By twisting the 1-handle, 
we obtain a continuous family of holed cone metrics $g_{t}$ 
on $(X \# X^{\prime}, \Sigma \sqcup \Sigma^{\prime})$ for $0 \leq t \leq 1$ 
such that $\widehat{\hol} ([g_{0}]) = \widehat{\hol} ([g_{1}])$ 
and $\Tw (g_{1}; g_{0}) \neq 0$. 
Nonetheless, Question~\ref{ques:product} remains considerable 
in the case that $X \setminus \Sigma$ admits a hyperbolic structure. 

Moreover, let $A_{t}$ be a continuous family of elements in $\Isom^{+} (\bbH^{3})$ for $0 \leq t \leq 1$. 
Let $\rho_{g}$ and $\rho_{g^{\prime}}$ denote the holonomy representations for $(X, \Sigma; g)$ and $(X^{\prime}, \Sigma^{\prime}; g^{\prime})$. 
By deforming metrics on the 1-handle, 
we obtain a continuous family of holed cone metrics on $(X \# X^{\prime}, \Sigma \sqcup \Sigma^{\prime})$ 
such that the holonomy representations of $\pi_{1}(X \# X^{\prime} \setminus \Sigma \sqcup \Sigma^{\prime}) = \pi_{1}(X \setminus \Sigma) * \pi_{1}(X^{\prime} \setminus \Sigma^{\prime})$
are induced by $\rho_{g}$ and $A_{t}^{-1} \rho_{g^{\prime}} A_{t}$. 
Hence there is a continuous family of holed cone metrics on $(X \# X^{\prime}, \Sigma \sqcup \Sigma^{\prime})$ with fixed cone angles. 
In other words, the local rigidity for holed cone structures does not hold in this case.

The map $\widehat{\hol} \colon \calHC^{\mathrm{irr}}(X,\Sigma) \to \widehat{\calX}^{\mathrm{cone}}(X,\Sigma)$ 
is not injective by Lemma~\ref{lem:subgroup}. 
Moreover, there are holed cone structures 
with a common holonomy representation and different cone angles.

\begin{prop}
\label{prop:angle}
Let $[g_{0}] \in \calHC^{\mathrm{irr}}_{+} (X,\Sigma)$ 
with cone angles $\theta_{i} > 0$ at $\Sigma_{i}$. 
Suppose that $\theta^{\prime}_{i} = \theta_{i} + 4\pi n_{i} > 0$ for some $n_{i} \in \bbZ$. 
Then there is $[g] \in \calHC^{\mathrm{irr}}_{+} (X,\Sigma)$ 
with cone angles $\theta^{\prime}_{i}$ at $\Sigma_{i}$ 
such that $\hol ([g]) = \hol ([g_{0}])$. 
\end{prop}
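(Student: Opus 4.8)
The plan is to construct the desired holed cone metric $g$ directly, by re-developing a neighborhood of each cone locus with extra winding, rather than by joining $[g_0]$ to $[g]$ through a family of representations. Observe first that the target datum $([\rho],\theta'_1,\dots,\theta'_n)$ with $[\rho]=\hol(g_0)$ already lies in $\widehat{\calX}^{\mathrm{cone}}_+(X,\Sigma)$: since $\theta'_i\equiv\theta_i\pmod{2\pi}$, the isometry $\rho(\mu_i)$ is a rotation of angle $\theta'_i$ just as well as of angle $\theta_i$, the angle being determined only modulo $2\pi$ by the group element. Hence it suffices to exhibit a holed cone structure with $\widehat{\hol}(g)=([\rho],\theta'_1,\dots,\theta'_n)$; applying $\hat\pi$ then yields $\hol(g)=[\rho]=\hol(g_0)$ together with the prescribed cone angles $\theta'_i>0$. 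Note that, because the cone angles genuinely change, $[g]$ need not lie in the same path-component of $\calHC^{\mathrm{irr}}_+(X,\Sigma)$ as $[g_0]$; the essential point is that a hole can absorb a discrepancy that no non-holed deformation could.

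First I would fix a handle decomposition for $g_0$ as in the definition of the framing invariant, together with the equivariant developing map $\dev_{g_0}$ on $\widetilde{X_2}$. On each neighborhood $N(\Sigma_i)$ I replace the developing map by that of the hyperbolic cone metric with cone angle $\theta'_i$, so that its image wraps around the axis $\ell_i$ by total angle $\theta'_i=\theta_i+4\pi n_i$ in the meridian direction; everywhere else (the $0$-handle and all handles disjoint from the cone loci) I keep $\dev_{g_0}$. This is equivariant for the \emph{same} holonomy $\rho$, because $\rho(\mu_i)$ is rotation by $\theta'_i=\theta_i$ in $G=\mathrm{PSL}(2,\bbC)$. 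Using Lemma~\ref{lem:1handle} I then extend $\dev_g$ across the $1$-handles joining the $N(\Sigma_i)$ to the $0$-handle, keeping the holonomy fixed.

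The main step is to extend $\dev_g$ across the $2$-handles. For each $2$-handle $D_k$ the attaching annulus is, relative to that of $g_0$, rotated by the extra winding introduced near the cone loci that it runs over. By Lemma~\ref{lem:bound} the only obstruction to filling it by an immersed $2$-handle is its class in $\pi_0(\Imm(S^1\times[0,1],\bbR^3))\cong\bbZ_2$, and this class differs from that of $g_0$ (which bounds) by $\sum_i a_{ki}\cdot 2n_i\equiv 0\pmod 2$, where $a_{ki}$ is the intersection number of the attaching annulus of $D_k$ with the meridian of $N(\Sigma_i)$ — exactly the computation identifying $F(g_0)$ with $\ker f$ in Lemma~\ref{lem:subgroup}. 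Hence every attaching annulus still bounds an immersed $2$-handle, so $\dev_g$ extends to an equivariant immersion on $\widetilde{X_2}$, which descends to a holed cone metric $g$ on $(X,\Sigma)$ (with holes filling $X\setminus X_2$).

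By construction $g$ has holonomy $\rho$ and cone angles $\theta'_i>0$; since $[\rho]$ is irreducible, $[g]\in\calHC^{\mathrm{irr}}_+(X,\Sigma)$ and $\hol(g)=[\rho]=\hol(g_0)$, which is the assertion. I expect the third paragraph to be the crux: the whole argument hinges on the vanishing of the $\bbZ_2$-obstruction of Lemma~\ref{lem:bound}, and this is precisely where the hypothesis that each increment is a multiple of $4\pi$ (equivalently, an even multiple of $2\pi$) is used. An odd multiple of $2\pi$ would change the winding by an odd number of generators of $\pi_1(\mathrm{SO}(3))\cong\bbZ_2$ along a meridian, and could leave a nontrivial obstruction on some $2$-handle, so that this construction would produce no holed cone metric with the prescribed holonomy and angles.
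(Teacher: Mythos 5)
Your proposal is correct and follows essentially the same route as the paper's proof: take a handle decomposition for $g_{0}$, modify the metric only near the cone loci so the cone angles become $\theta'_{i}=\theta_{i}+4\pi n_{i}$ while the holonomy is unchanged, and observe that the attaching annuli of the $2$-handles are altered by $2n_{i}$ full twists, so the $\bbZ_{2}$-obstruction of Lemma~\ref{lem:bound} vanishes and the metric extends to $X_{2}$. Your explicit computation $\sum_{i} a_{ki}\cdot 2n_{i}\equiv 0 \pmod 2$ and the remark on why an odd multiple of $2\pi$ would fail just spell out what the paper states more tersely.
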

\begin{proof}
Take a handle decomposition for $g_{0}$. 
There is a metric $g$ on $X_{1}$ with cone angles $\theta^{\prime}_{i}$ at $\Sigma_{i}$ 
which are the same as $g_{0}$ outside the neighborhoods of the cone loci. 
Then the attaching annulus of 2-handles for $g$ 
are obtained by adding an even number of twists (by $2n_{i}$-rotations) to that for $g_{0}$. 
Since their twist obstructions in Lemma~\ref{lem:bound} coincide, 
the metric $g$ extends to $X_{2}$. 
Thus we obtain $[g] \in \calHC^{\mathrm{irr}}_{+} (X,\Sigma)$ 
with cone angles $\theta^{\prime}_{i}$ at $\Sigma_{i}$ 
such that $\hol ([g]) = \hol ([g_{0}])$. 
\end{proof}

\begin{rem}
The map $\widehat{\hol} \colon \calHC^{\mathrm{irr}}(X,\Sigma) \to \widehat{\calX}^{\mathrm{cone}}(X,\Sigma)$ 
is not surjective in general. 
\end{rem}

Since an oriented 3-manifold is parallelizable, 
any (possibly incomplete) metric on it can be lifted to a spin structure. 
For a hyperbolic metric $g$ on an oriented 3-manifold $M$, 
the holonomy representation $\rho_{g} \colon \Gamma = \pi_{1}(M) \to \mathrm{PSL}(2, \bbC)$ 
has a lift to $\tilde{\rho}_{g} \colon \Gamma \to  \mathrm{SL}(2, \bbC)$, 
which is the holonomy representation of a spin structure (see \cite{Culler86}). 
For a fixed metric $g$, there is a one-to-one correspondence 
between the spin structures over $g$ 
and the group $H^{1}(\Gamma, \bbZ_{2}) = \Hom (\Gamma, \bbZ_{2})$. 
Due to Heusener and Porti~\cite[Theorem 1.4]{HP04}, however, 
there exists an oriented 3-manifold $M$ with boundary consisting of a torus 
such that there are components of the character variety $\calX (\Gamma)$ 
whose elements do not lift to $\mathrm{SL}(2, \bbC)$. 
An element in $\widehat{\calX}^{\mathrm{cone}}(X,\Sigma)$ 
with a representation in such a component 
is not contained in the image of $\widehat{\hol}$. 

For instance, 
let $M$ be a bundle over $S^{1}$ whose fiber is a once-punctured torus $S$. 
Suppose that $M$ is obtained by the action of the monodromy on $H_{1}(S, \bbZ)$ 
given by the matrix
$\begin{pmatrix}
1 & 2 \\
2& 5 
\end{pmatrix}$. 
Then there are presentations 
$\pi_{1}(S) = \langle \alpha, \beta \rangle$ and  
$\pi_{1}(M) = \langle \alpha, \beta, \mu \mid 
\mu \alpha \mu^{-1} = \alpha \beta^{2}, \mu \beta \mu^{-1} = \beta (\alpha \beta^{2})^{2} \rangle$. 
Let 
\[
A = 
\begin{bmatrix}
i & 0 \\
0 & -i 
\end{bmatrix}, \
B = 
\begin{bmatrix}
1 & 1 \\
-2 & -1 
\end{bmatrix}, \
I = 
\begin{bmatrix}
1 & 0 \\
0 & 1 
\end{bmatrix} 
\in \mathrm{PSL}(2, \bbC). 
\]
By setting $\rho (\alpha) = A$, $\rho (\beta) = B$, and $\rho (\mu) = I$, 
we obtain a representation $\rho \colon \Gamma = \pi_{1}(M) \to \mathrm{PSL}(2, \bbC)$, 
which does not lift to $\mathrm{SL}(2, \bbC)$. 
The element $\alpha \beta \alpha^{-1} \beta^{-1}$ is peripheral in $S$. 
Its image 
$ABA^{-1}B^{-1} = 
\begin{bmatrix}
-3 & -2 \\
-4 & -3 
\end{bmatrix}$ 
is loxodromic. 
Let $X$ be the manifold obtained by the Dehn filling on $M$ along the slope corresponding $\mu$. 
Let $\Sigma$ be the core of the attached solid torus. 
Then $([\rho], 2\pi) \in \widehat{\calX}^{\mathrm{cone}}(X,\Sigma)$.

\begin{ques}
Let $\hat{\rho} = (\rho, \theta_{1}, \dots, \theta_{n}) \in \widehat{\calX}^{\mathrm{cone}}(X,\Sigma)$. 
Suppose that $\rho$ has a lift to a representation to $\mathrm{SL}(2, \bbC)$. 
Then is $\hat{\rho}$ contained in the image of the map 
$\widehat{\hol} \colon \calHC^{\mathrm{irr}}(X,\Sigma) \to \widehat{\calX}^{\mathrm{cone}}(X,\Sigma)$? 
\end{ques}

\section{Cone structures in the deformation space}
\label{section:cone}

In this section, 
we consider (non-holed hyperbolic) cone structures in the space of holed cone structures. 
Recall that a cone structure is an equivalence class of cone metrics, 
where the equivalence relation is induced by isometry isotopic to the identity. 
Suppose that there are cone structures on $(X, \Sigma)$. 
Due to Kojima~\cite[Theorem 1.2.1]{Kojima98}, 
this is equivalent to that $X \setminus \Sigma$ admits a hyperbolic structure. 
In particular, the 3-manifold $X \setminus \Sigma$ is irreducible.

\begin{thm}
\label{thm:conehol}
Let $g$ and $g^{\prime}$ be cone metrics on $(X, \Sigma)$. 
Suppose that $\widehat{\hol} ([g]) = \widehat{\hol} ([g^{\prime}])$. 
Then $g$ and $g^{\prime}$ are equivalent as cone metrics. 
\end{thm}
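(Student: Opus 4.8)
The plan is to produce a genuine isometry $\phi\colon (X,\Sigma;g)\to(X,\Sigma;g')$ that is isotopic to the identity, directly from the developing maps, exploiting the fact that a genuine cone metric (unlike a holed one) has its developing map defined on \emph{all} of the universal cover $\widetilde{M}$ of $M=X\setminus\Sigma$. First I would use that $\widehat{\hol}(g)=\widehat{\hol}(g')$ records the conjugacy class of the holonomy together with the cone angles: since the holonomy is irreducible, $t^{-1}(t(\rho))$ is a single conjugacy orbit, so after post-composing $\dev_{g'}$ with a suitable element of $G$ I may assume $\rho_g=\rho_{g'}=\rho$ on the nose. Then $\dev_g,\dev_{g'}\colon\widetilde{M}\to\bbH^3$ are two $\rho$-equivariant immersions on the same simply connected domain, with the same cone angle $\theta_i$ at each $\Sigma_i$.

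Next I would construct $\phi$ by analytic continuation. Near a fixed cone locus $\Sigma_1$ both metrics are isometric to the standard hyperbolic cone of angle $\theta_1$ about the axis $\ell$ of $\rho(\lambda_1)$, on which $\rho(\mu_1)$ acts as the rotation of angle $\theta_1$ (and, when $\theta_1=0$, to the standard rank-two parabolic cusp). Since the cone angle \emph{and} the peripheral holonomy agree, these neighborhoods are canonically isometric compatibly with the developing maps, giving a germ of isometry $\tilde\phi$ with $\dev_{g'}\circ\tilde\phi=\dev_g$ there. The decisive point, and the place where genuineness is used, is that this matching involves no winding around $\Sigma_1$, i.e. the twist number $x_1$ vanishes: a genuine cone of fixed angle develops with a fixed amount of angular wrapping, so there is no room for the extra twisting that Remark~\ref{rem:cusp} exhibits for holed metrics. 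Because $\widetilde{M}$ is simply connected, the germ $\tilde\phi$ continues uniquely and path-independently to a global immersion $\tilde\phi\colon\widetilde{M}\to\widetilde{M}$ with $\dev_{g'}\circ\tilde\phi=\dev_g$; the identity $\dev_{g'}(\tilde\phi(\gamma x))=\rho(\gamma)\dev_g(x)=\dev_{g'}(\gamma\tilde\phi(x))$, valid because $\rho_g=\rho_{g'}$, shows $\tilde\phi$ is $\Gamma$-equivariant, so it descends to a local isometry $\phi\colon(M;g)\to(M;g')$ which near the cone loci agrees with the matching above and hence extends across $\Sigma$.

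It then remains to see that $\phi$ is a diffeomorphism isotopic to the identity. Completeness of the two cone metrics makes the local isometry $\phi$ a covering of complete cone-manifolds, of degree one since $\phi_{\ast}=\mathrm{id}$ on $\pi_1(M)$, so $\phi$ is an honest isometry fixing $\Sigma$. As $\phi$ induces the identity on $\Gamma=\pi_1(X\setminus\Sigma)$, and since $x_i=0$ forbids any Dehn twisting around the $\Sigma_i$ so that $\phi$ restricts to a map isotopic to the identity on each $\partial N(\Sigma_i)$, Waldhausen's rigidity for Haken $3$-manifolds (applicable because $X\setminus\Sigma$ is irreducible, as noted above, with toral boundary) shows $\phi$ is isotopic to the identity. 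This exhibits $g$ and $g'$ as equivalent cone metrics.

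The main obstacle is the control near the cone loci: verifying that equal cone angle plus equal peripheral holonomy forces both the isometric identification of the cone neighborhoods and the vanishing of the twist number $x_i$, which is precisely the rigidity distinguishing genuine cone metrics from holed ones. Equivalently, this step shows $\Fr(g';g)=0$, so that Lemma~\ref{lem:framing} already yields equality of the underlying holed cone structures; the analytic-continuation argument above is what upgrades this to equivalence as \emph{cone} metrics. The cusp case $\theta_i=0$ is handled identically, using the rigidity of rank-two parabolic cusp neighborhoods together with Remark~\ref{rem:cuspframe} to fix $x_i=0$.
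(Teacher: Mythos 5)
Your overall strategy (use the developing maps and equal holonomy to build an isometry by analytic continuation, then invoke Haken rigidity to get an isotopy to the identity) is the same as the paper's, but your equivariance step contains a genuine gap, and it sits exactly at the crux of the theorem. You argue that $\dev_{g'}(\tilde\phi(\gamma x))=\rho(\gamma)\dev_g(x)=\dev_{g'}(\gamma\tilde\phi(x))$ and conclude $\tilde\phi(\gamma x)=\gamma\tilde\phi(x)$; this inference is invalid because $\dev_{g'}$ is not injective. Two local isometries with the same $\dev_{g'}$-composition can differ by a nontrivial deck transformation acting trivially through $\rho_{g'}$ (for instance $\mu_i^{a}$ whenever $a\theta_i\in 2\pi\bbZ$), and this ambiguity is precisely the winding/framing phenomenon of Section~\ref{section:space}: Lemma~\ref{lem:subgroup} and Proposition~\ref{prop:angle} show that for \emph{holed} structures it is actually realized, the fiber of $\widehat{\hol}$ being infinite. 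So any proof must do real work exactly where your argument waves. Your germ is defined only near the peripheral tubes, whose fundamental groups need not generate $\Gamma$, so the continued map $\tilde\phi$ has no reason to be equivariant, nor even to descend to $M$; it would suffice to verify $\tilde\phi(\gamma x_0)=\gamma\tilde\phi(x_0)$ at a single point for each $\gamma$ (the agreement set is open and closed), but producing that single point is the whole difficulty, since $\tilde\phi$ on the lift $\gamma\widetilde{N}_1$ is defined by continuation and may differ from the $\gamma$-conjugated germ by a winding transformation.

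The paper's proof is structured to avoid exactly this: it first builds the isometry \emph{downstairs} on $X_1\setminus\Sigma$ (tubes, $0$-handle and $1$-handles), where $\pi_1(X_1\setminus\Sigma)\to\Gamma$ is surjective; the lift of a downstairs isometry is automatically equivariant, and unique continuation (Lemma~\ref{lem:extend}) then preserves equivariance, so descent and $\phi_*=\mathrm{id}$ come for free. Matching the $1$-handles is where the twisting must be killed, and your argument never reaches them: your ``no winding'' discussion controls only the coordinates $x_i$ of the framing, while the $\bbZ_2$-coordinates $y_i,z_j$ attached to the $1$-handles are not addressed, so even your parenthetical claim $\Fr(g';g)=0$ is not established. (Note also that deducing equivalence of \emph{cone} metrics from equality of the underlying holed structures is Corollary~\ref{cor:coneeq}, which the paper derives from Theorem~\ref{thm:conehol} itself, so that route is circular; your proposal correctly leans on the continuation argument for the upgrade, but that argument is where the gap lies. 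A smaller shared issue: you assert the continuation exists along every path, whereas one must argue, as in the open-and-closed step of Lemma~\ref{lem:extend}, that it cannot run into the singular locus.) Repairing your proof essentially forces you to extend the germ over a $\pi_1$-surjective subcomplex before continuing, which is the paper's handle-decomposition argument.
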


To show the theorem, we need the following lemmas. 
Take a handle decomposition $X_{0} \subset X_{1} \subset X_{2} \subset X_{3} = X$ of $(X, \Sigma)$.

\begin{lem}
\label{lem:extend}
Let $g$ and $g^{\prime}$ be cone metrics on $(X, \Sigma)$. 
If there is an isometry $\iota$ from $(X_{1} \setminus \Sigma; g)$ to $(X_{1} \setminus \Sigma; g')$, 
then $\iota$ extends to an isometry from $(X \setminus \Sigma; g)$ to $(X \setminus \Sigma; g')$. 
\end{lem}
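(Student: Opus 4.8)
The plan is to realize the extension at the level of developing maps and to obtain it by analytic continuation, exploiting that $g$ and $g'$ are genuine cone metrics, i.e.\ that the holes are empty. Thus $\dev_{g}$ and $\dev_{g'}$ are both defined on the whole universal cover $\widetilde{X \setminus \Sigma}$, and since $\Sigma \subset X_{0}$, the $2$- and $3$-handles are filled and lie at positive distance from $\Sigma$. First I would lift $\iota$. Passing to the universal cover of $X_{1} \setminus \Sigma$, the isometry $\iota$ lifts to a map $\widetilde{\iota}$, and comparing the two developing maps there produces an isometry $\Phi$ of $\bbH^{3}$ with $\dev_{g'} \circ \widetilde{\iota} = \Phi \circ \dev_{g}$. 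In particular $\rho_{g'}$ and $\Phi \rho_{g} \Phi^{-1}$ agree on $\pi_{1}(X_{1} \setminus \Sigma)$, which surjects onto $\Gamma$ since attaching the higher handles only imposes relations.

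Next I would extend $\widetilde{\iota}$ across the $2$- and $3$-handles. Near any point lying over $X_{1} \setminus \Sigma$ the composite $\dev_{g'}^{-1} \circ \Phi \circ \dev_{g}$ is a well-defined germ of a local isometry, both developing maps being local diffeomorphisms away from $\Sigma$, and I would continue this germ along every path of the simply connected cover $\widetilde{X \setminus \Sigma}$. Simple connectivity removes all monodromy, so the continuation defines a single-valued map $\widetilde{\iota} \colon \widetilde{X \setminus \Sigma} \to \widetilde{X \setminus \Sigma}$ with $\dev_{g'} \circ \widetilde{\iota} = \Phi \circ \dev_{g}$ throughout, agreeing with the earlier lift over $X_{1} \setminus \Sigma$. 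Uniqueness of a local isometry given its $1$-jet at a point guarantees that this is the only such extension and that it is smooth.

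Finally I would descend $\widetilde{\iota}$. The intertwining $\dev_{g'} \circ \widetilde{\iota} = \Phi \circ \dev_{g}$ together with equivariance of the developing maps forces $\widetilde{\iota}(\gamma x) = \gamma' \widetilde{\iota}(x)$, where $\gamma \mapsto \gamma'$ is the automorphism of $\Gamma$ with $\rho_{g'}(\gamma') = \Phi \rho_{g}(\gamma) \Phi^{-1}$ (which lands in $\rho_{g'}(\Gamma)$ because $\pi_{1}(X_{1} \setminus \Sigma)$ surjects onto $\Gamma$). Hence $\widetilde{\iota}$ is equivariant and projects to an isometry $(X \setminus \Sigma; g) \to (X \setminus \Sigma; g')$ extending $\iota$, as required.

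The hard part is the continuation step: one must know that $\dev_{g'}^{-1} \circ \Phi \circ \dev_{g}$ really continues over each handle rather than running off the manifold. This is exactly where genuineness is used. Because $g'$ is an un-holed complete cone path-metric whose singular set is the fixed $\Sigma$, and the handles being continued over are compact and disjoint from $\Sigma$, the lift of the developed path through $\dev_{g'}$ cannot escape in finite time; completeness of the cone metric away from $\Sigma$ supplies the missing lifts. For a holed metric $\dev_{g'}$ would be undefined over part of a handle and the continuation could genuinely fail --- this is precisely the mechanism behind the nontriviality of the framing invariant $\Fr$, and its absence here is what makes the extension automatic.
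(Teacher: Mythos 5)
Your argument is essentially the paper's own proof: lift $\iota$ to the universal cover, extend it by analytic continuation of local isometries (equivalently of the germ $\dev_{g'}^{-1}\circ\Phi\circ\dev_{g}$) along paths, invoke simple connectivity of $\widetilde{X\setminus\Sigma}$ to kill monodromy, and descend by equivariance, with surjectivity of $\pi_{1}(X_{1}\setminus\Sigma)\to\Gamma$ playing the same role in both. The one delicate point you flag --- that the continuation cannot fail by the image running into the singular locus of $g'$ --- is exactly the step the paper also dispatches in one line (``the subset of $\widetilde{M}$ consisting of the points to which the isometry extends is closed and open''), and while your stated justification is imprecise (the cone metric restricted to $X\setminus\Sigma$ is \emph{not} complete near $\Sigma$, which is precisely why the closedness needs an argument), your treatment of this step is no less rigorous than the paper's.
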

\begin{proof}
Let $M = X \setminus \Sigma$ and $M_{1} = X_{1} \setminus \Sigma$. 
Let $\widetilde{M}$ denote the universal cover of $M$, 
and let $\widetilde{M_{1}} \subset \widetilde{M}$ denote the preimage of $M_{1} \subset M$. 
Since the natural homomorphism $\pi_{1} (M_{1}) \to \pi_{1} (X \setminus \Sigma)$ 
is surjective, 
the space $\widetilde{M_{1}}$ is connected. 
The metrics $g$ and $g^{\prime}$ induce the holonomy representations $\rho_{g}$ and $\rho_{g^{\prime}}$. 
We have $\rho_{g^{\prime}} = \rho_{g} \circ \iota_{*}$. 
The developing maps $\dev_{g}, \dev_{g^{\prime}} \colon \widetilde{M} \to \bbH^{3}$ 
satisfy that $\dev_{g^{\prime}}|_{\widetilde{M_{1}}} = \dev_{g^{\prime}}|_{\widetilde{M_{1}}} \circ \tilde{\iota}$, 
where $\tilde{\iota} \colon (\widetilde{M_{1}}; g) \to (\widetilde{M_{1}}; g^{\prime})$ 
is the isometry induced by $\iota$. 
We extend $\tilde{\iota}$ to an isometry from $(\widetilde{M}; g)$ to $(\widetilde{M}; g^{\prime})$ 
by connecting local isometries along a path from $\widetilde{M_{1}}$. 
Since $\widetilde{M}$ is simply connected, the extension is well-defined, 
that is, it does not depend on the ways of taking paths. 
The subset of $\widetilde{M}$ consisting of the points to which the isometry extends 
is closed and open. 
Hence we can extend $\tilde{\iota}$ to an isometry from the whole of $\widetilde{M}$. 
The uniqueness of the extension implies that 
the isometry $\tilde{\iota} \colon (\widetilde{M}; g) \to (\widetilde{M}; g^{\prime})$ is equivariant 
with respect to $\rho_{g}$ and $\rho_{g^{\prime}}$. 
By taking the projections, 
we obtain an isometry from $(M; g)$ to $(M; g^{\prime})$ which is an extension of $\iota$. 
\end{proof}

\begin{lem}
\label{lem:homeo}
Suppose that there is an isometry between cone metrics 
$(X, \Sigma; g)$ and $(X, \Sigma; g^{\prime})$ 
which induces the identity of the fundamental group of $X \setminus \Sigma$. 
Then $g$ and $g^{\prime}$ are equivalent as cone metrics. 
\end{lem}
\begin{proof}
Let $\Homeo (X \setminus \Sigma)$ denote the space of homeomorphisms of $X \setminus \Sigma$ 
with the compact-open topology. 
Let $\Out (\Gamma)$ denote 
the outer automorphism group of $\Gamma = \pi_{1} (X \setminus \Sigma)$. 
The action of homeomorphisms on the fundamental group $\Gamma$ induces 
the natural homomorphism $j \colon \Homeo (X \setminus \Sigma) \to \Out (\Gamma)$. 
Isotopic self-homeomorphisms of $X \setminus \Sigma$ are mapped by $j$ to a common element of $\Out (\Gamma)$. 
Let $i \colon \Isom (X \setminus \Sigma) \to \Homeo (X \setminus \Sigma)$ 
denote the inclusion map for a fixed hyperbolic structure on $X \setminus \Sigma$. 
The Mostow rigidity implies that 
every self-homeomorphism of $X \setminus \Sigma$ is isotopic to an isometry. 
In other words, every path-connected component of $\Homeo (X \setminus \Sigma)$ 
contains an element of $\Isom (X \setminus \Sigma)$. 
Moreover, $j \circ i \colon  \Isom (X \setminus \Sigma) \to \Out (\Gamma)$ is an isomorphism 
by \cite[Theorem C.5.6]{BP92}. 
Hence the kernel of $j$ is a single path-connected component of $\Homeo (X \setminus \Sigma)$. 
Therefore the isometry between $(X, \Sigma; g)$ and $(X, \Sigma; g^{\prime})$ is isotopic to the identity. 
\end{proof}

\begin{proof}[Proof of Theorem~\ref{thm:conehol}]
Since $\widehat{\hol} ([g]) = \widehat{\hol} ([g^{\prime}])$, 
we may assume that 
there is an isometry $(X_{0} \setminus \Sigma; g)$ to $(X_{0} \setminus \Sigma; g^{\prime})$. 
By deforming $g^{\prime}$ by isotopy, 
we extend this isometry to $X_{1} \setminus \Sigma$. 
Then we obtain an isometry from $(X_{1} \setminus \Sigma; g)$ to $(X_{1} \setminus \Sigma; g^{\prime})$. 
Hence there is an isometry from $(X \setminus \Sigma; g)$ to $(X \setminus \Sigma; g^{\prime})$ 
by Lemma~\ref{lem:extend}. 
Since $\widehat{\hol} ([g]) = \widehat{\hol} ([g^{\prime}])$, 
this isometry induces the identity of the fundamental group. 
Hence $g$ and $g^{\prime}$ are equivalent as cone metrics by Lemma~\ref{lem:homeo}. 
\end{proof}

\begin{cor}
\label{cor:coneeq}
Two cone metrics $g$ and $g^{\prime}$ on $(X,\Sigma)$ are equivalent as holed cone metrics 
if and only if they are equivalent as cone metrics. 
\end{cor}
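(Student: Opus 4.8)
The plan is to reduce the corollary to Theorem~\ref{thm:conehol}, which already supplies the substantive content. The forward implication is immediate: if $g$ and $g^{\prime}$ are equivalent as cone metrics, then by definition there is an isometry isotopic to the identity preserving $\Sigma$ carrying $g$ to $g^{\prime}$, and this is exactly an instance of relation (1) of Definition~\ref{dfn:equivalence} in which both sets of holes are taken to be empty. Hence $g$ and $g^{\prime}$ are equivalent as holed cone metrics.

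For the reverse implication I would first check that each of the three moves in Definition~\ref{dfn:equivalence} leaves the pair $\widehat{\hol}([g]) = (\hol(g), \Theta(g))$ unchanged. As already recorded in Section~\ref{section:space}, the restriction and extension moves (2) and (3) do not affect the holonomy representation; moreover the holes lie in $X \setminus \Sigma$ by definition, so these moves never touch a neighborhood of $\Sigma$ and therefore leave the tuple of cone angles $\Theta$ intact as well. Move (1) is an isometry isotopic to the identity respecting $\Sigma$: being isotopic to the identity it fixes each $\Sigma_{i}$ setwise and preserves $\hol$, and being an isometry it preserves each cone angle $\theta_{i}$. Chaining the moves, if $g$ and $g^{\prime}$ are equivalent as holed cone metrics then $\widehat{\hol}(g) = \widehat{\hol}(g^{\prime})$. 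At that point Theorem~\ref{thm:conehol} applies verbatim to the cone metrics $g$ and $g^{\prime}$ and yields that they are equivalent as cone metrics, closing the argument.

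The one step that I expect to require genuine care, rather than being purely formal, is verifying that the full tuple $\Theta$ and not merely the conjugacy class $\hol$ is preserved: the cone angle at $\Sigma_{i}$ is recovered from the rotation $\rho(\mu_{i})$ only modulo $2\pi$, so the conclusion $\Theta(g) = \Theta(g^{\prime})$ as honest angles must be read off from the geometry of the moves themselves rather than from the holonomy alone. This is why it matters that moves (2) and (3) physically leave a collar of $\Sigma$ unaltered while move (1) acts isometrically on it; with that observation in place the exact equality of angles, including the integer multiples of $2\pi$, follows, and the remainder of the proof is bookkeeping.
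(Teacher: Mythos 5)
Your proposal is correct and follows essentially the same route as the paper: the forward direction is immediate from relation (1) of Definition~\ref{dfn:equivalence}, and the reverse direction notes that holed-cone equivalence forces $\widehat{\hol}(g) = \widehat{\hol}(g^{\prime})$ and then invokes Theorem~\ref{thm:conehol}. Your extra care in checking that the moves preserve the actual cone angles (not just the holonomy modulo $2\pi$) is a worthwhile unpacking of what the paper asserts implicitly when it defines $\widehat{\hol}$ on $\calHC^{\mathrm{irr}}(X,\Sigma)$ in Section~\ref{section:space}, but it is the same argument, not a different one.
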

\begin{proof}
The ``if'' part is trivial. 
Suppose that $g$ and $g^{\prime}$ are equivalent as holed cone metrics. 
Then $\widehat{\hol} ([g]) = \widehat{\hol} ([g^{\prime}])$. 
Hence $g$ and $g^{\prime}$ are equivalent as holed cone metrics by Theorem~\ref{thm:conehol}. 
\end{proof}

Let $\calC (X, \Sigma)$ denote the space of hyperbolic cone structures on $(X, \Sigma)$. 
For $[g] \in \calC (X, \Sigma)$, 
the holonomy representation $\rho_{g}$ is irreducible \cite[Lemma 4.6]{HK98}. 
Corollary~\ref{cor:coneeq} implies that 
we may regard $\calC (X, \Sigma) \subset \calHC^{\mathrm{irr}} (X, \Sigma)$. 
Therefore a holed cone structure is a generalization of a cone structure. 

The space $\calC (X, \Sigma)$ is an open subspace of $\calHC^{\mathrm{irr}} (X, \Sigma)$ 
because there is small deformation of cone structures 
(see \cite[Theorem 6.7]{BLP05}, \cite[Lemma I.1.7.2]{CEG87}, or \cite[Theorem 5.3]{CHK00} for details). 
The generalized hyperbolic Dehn surgery around the cusped hyperbolic structure $[g_{c}] \in \calC (X, \Sigma)$ induces a neighborhood of $\widehat{\calX}^{\mathrm{cone}} (\Gamma)$ of the discrete faithful representation, which is parametrized by small cone angles.

The global structure of the space $\calC (X, \Sigma)$ is not well known. 
For instance, it is not known whether $\calC (X, \Sigma)$ is path-connected. 
Theorem~\ref{thm:conehol} implies that the map 
$\widehat{\hol} \colon \calC (X, \Sigma) \to \widehat{\calX}^{\mathrm{cone}} (\Gamma)$ 
is injective. 
Here one may ask whether the condition of cone angles is necessary. 
While a holed cone structure is a generalization of a cone structure, 
an element in the components of $\calHC^{\mathrm{irr}} (X, \Sigma)$ containing $\calC (X, \Sigma)$ 
can be regarded as a moderate generalization. 
If we can fill holes, the generalization is more meaningful. 
Even in the case that $\calC (X, \Sigma)$ is not path-connected, 
it is convenient if there is a continuous deformation between two cone structures via holed cone structures. 
From these perspectives, we ask the following questions. 

\begin{ques}
Is the map $\hol \colon \calC (X, \Sigma) \to \calX^{\mathrm{cone}} (\Gamma)$ injective? 
In other words, 
are there cone-manifolds with a common holonomy representation and distinct cone angles? 
\end{ques}

\begin{ques}
Let $[g_{t}] \in \calHC^{\mathrm{irr}} (X, \Sigma)$ be a continuous family for $0 \leq t \leq 1$. 
Suppose that $[g_{0}] \in \calC (X, \Sigma)$. 
Suppose that there is $[g'] \in \calC (X, \Sigma)$ such that 
$\widehat{\hol} ([g']) = \widehat{\hol} ([g_{1}])$. 
Then $[g']= [g_{1}]$? 
Equivalently, is the map $\widehat{\hol}$ injective 
on the union of components of $\calHC^{\mathrm{irr}} (X, \Sigma)$ 
containing $\calC (X, \Sigma)$? 
\end{ques}

\begin{ques}
Can a cone structure in $\calC (X, \Sigma)$ be deformed to 
the cusped hyperbolic structure on $X \setminus \Sigma$ 
via holed cone structures? 
Equivalently, 
is $\calC (X, \Sigma)$ contained in a single component of $\calHC^{\mathrm{irr}} (X, \Sigma)$? 
\end{ques}

\section{Volumes of holed cone structures}
\label{section:vol}

In this section, we introduce the volume of a holed cone structure. 
To obtain a well-defined value, 
we add the volume enclosed by the holes 
to the volume of a holed cone metric.

Let $f \colon S^{2} \to \bbH^{3}$ be an immersion. 
Fix a normal orientation on $f(S^{2})$. 
We define the signed enclosed volume $V(f) \in \bbR$ as follows. 
Suppose that $f$ is generic. 
Let $R_{0}, R_{1}, \dots, R_{N}$ denote the connected components 
of the complement $\bbH^{3} \setminus f(S^{2})$, 
where $R_{0}$ is the unbounded component. 
Each component $R_{i}$ is assigned an integer $\deg (R_{i})$ so that 
\begin{itemize}
\item $\deg (R_{0}) = 0$, and 
\item $\deg (R_{j}) = \deg (R_{i}) + 1$ 
         if $R_{i}$ and $R_{j}$ are adjacent 
         and the normal orientation on $f(S^{2})$ has direction from $R_{i}$ to $R_{j}$. 
\end{itemize}
Then we define $V(f) = \sum_{i = 1}^{N} \deg (R_{i}) \vol (R_{i})$. 
The function $V \colon \Imm (S^{2}, \bbH^{3}) \to \bbR$ is defined by continuous extension. 
Recall that $\Imm (S^{2}, \bbH^{3})$ is the space of immersions from $S^{2}$ to $\bbH^{3}$ 
with the $C^{\infty}$ topology. 
The following lemma verifies the definition. 

\begin{lem}
\label{lem:deg}
Let $F$ be a smooth map from the 3-ball $B^{3}$ to $\bbH^{3}$ 
whose restriction to $\partial B^{3} = S^{2}$ is a generic immersion $f \in \Imm (S^{2}, \bbH^{3})$. 
The orientation on $B^{3}$ induces the normal inward orientation on $f(S^{2})$. 
Write $\bbH^{3} \setminus f(S^{2}) = R_{0} \sqcup R_{1} \sqcup \dots \sqcup R_{N}$ 
as above. 
Then the map $F$ has the mapping degree $\deg (R_{i})$ 
on each $R_{i}$ for $0 \leq i \leq N$. 
Moreover, we have 
$\int_{B^{3}} F^{*} \omega = V(f)$, 
where $\omega$ is the volume form on $\bbH^{3}$. 
\end{lem}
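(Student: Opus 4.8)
The plan is to prove the statement about mapping degrees first, and then derive the volume integral formula as a consequence. The key observation is that for a generic immersion $f$, the complement $\bbH^{3} \setminus f(S^{2})$ decomposes into regions $R_{i}$, and the combinatorial degree $\deg(R_{i})$ assigned by crossing the surface coincides with the topological mapping degree of $F$ over a point in $R_{i}$. First I would recall the standard definition of mapping degree: for a regular value $p \in R_{i}$ of the smooth map $F \colon B^{3} \to \bbH^{3}$, the degree is the signed count $\sum_{q \in F^{-1}(p)} \mathrm{sign}(\det dF_{q})$, where the sign is taken with respect to the orientations on $B^{3}$ and $\bbH^{3}$. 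Because $F|_{\partial B^{3}} = f$ is a fixed immersion, this count is locally constant on each region $R_{i}$ and thus well-defined per region.

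The main step is to show that this mapping degree satisfies exactly the two defining rules of $\deg(R_{i})$. For the unbounded region $R_{0}$, a point $p$ far from the compact image $F(B^{3})$ has empty preimage, so the degree is $0 = \deg(R_{0})$. For adjacent regions, I would argue that crossing the surface $f(S^{2})$ transversally from $R_{i}$ into $R_{j}$ changes the preimage count by exactly one point, and the sign of the change is determined by the normal orientation. Here the choice of inward normal orientation on $f(S^{2})$ induced by the orientation of $B^{3}$ is crucial: as $p$ moves across $f(S^{2})$ in the direction of the normal (from $R_{i}$ to $R_{j}$), a preimage point is created (or destroyed) with a definite sign, and the inward convention ensures this sign is $+1$, giving $\deg(R_{j}) = \deg(R_{i}) + 1$. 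This is precisely the recursion defining the combinatorial degree, so by induction starting from $R_{0}$ the two notions agree on every region.

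Once the mapping degrees are identified, the volume formula follows from the change-of-variables / degree formula for integration. Specifically, $\int_{B^{3}} F^{*}\omega = \int_{\bbH^{3}} (\deg_{F})(p)\, \omega(p)$, where $\deg_{F}(p)$ is the mapping degree over $p$, which is the standard fact that pulling back a top form and integrating computes the signed volume weighted by local degree. Since $\deg_{F}$ equals $\deg(R_{i})$ on each $R_{i}$, and the regions partition $\bbH^{3}$ up to the measure-zero set $f(S^{2})$, this integral equals $\sum_{i=1}^{N} \deg(R_{i}) \vol(R_{i}) = V(f)$ (the $R_{0}$ term vanishes since $\deg(R_{0}) = 0$).

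The hard part will be making the sign bookkeeping in the adjacency rule precise and matching it with the stated normal convention, since an orientation error would flip the recursion and negate the final answer. I would handle this carefully by working in a local chart near a transversal crossing point of $f(S^{2})$: choosing coordinates so that $F$ looks like a fold or a local diffeomorphism onto a neighborhood, one can read off directly that moving a regular value in the inward-normal direction adds a preimage with positive Jacobian sign. The genericity assumption guarantees that one only needs to analyze codimension-one crossings (the top-dimensional strata of the image), since lower-dimensional strata do not affect either the degree count or the volume integral. Having established the formula for generic $f$, the continuous extension of $V$ to all of $\Imm(S^{2}, \bbH^{3})$ and the continuity of $\int_{B^{3}} F^{*}\omega$ in $F$ together extend the identity to non-generic immersions.
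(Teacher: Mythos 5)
Your proposal is correct and follows essentially the same route as the paper's proof: identify the topological mapping degree with the combinatorial $\deg(R_{i})$ by checking that the bounded image forces degree $0$ on $R_{0}$ and that the inward-normal orientation reproduces the adjacency recursion, then conclude via the change-of-variables (degree) formula that $\int_{B^{3}} F^{*}\omega = \sum_{i} \deg(R_{i}) \vol(R_{i}) = V(f)$. Your added care with the sign bookkeeping at transversal crossings is exactly the point the paper compresses into ``the choice of the orientation implies the same relation,'' so there is no substantive difference in method.
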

\begin{proof}
Note that any immersion $f$ has such an extension $F$ 
since the space $\bbH^{3}$ is contractible. 
The mapping degree of $F$ on a regular value $p$ in $\bbH^{3} \setminus f(S^{2})$ 
is defined as the sum of $\pm 1$'s over $F^{-1}(p)$, 
where the sign is positive if and only if $F$ preserves the orientation around the point. 
The mapping degree of $F$ is constant on each $R_{i}$. 
Since the image of $F$ is bounded, we have $\deg (R_{0}) = 0$. 
The choice of the orientation implies that 
the mapping degrees of $F$ satisfy the same relation for $\deg (R_{i})$. 
Hence they coincide. 
The definition of induced form implies that 
\[
\int_{B^{3}} F^{*} \omega = \sum_{i=1}^{N} \int_{R_{i}} \deg (R_{i}) \omega 
= \sum_{i = 1}^{N} \deg (R_{i}) \vol (R_{i}) = V(f). 
\]
\end{proof}

An explicit formula for $V(f)$ may be useful 
for differential geometric approaches. 

\begin{ques}
Is there an explicit formula for $V(f)$? 
Is it possible to express it by the curvatures of $f$? 
\end{ques}

Let $g \in \widetilde{\calHC}^{\mathrm{irr}} (X,\Sigma)$
and $\sigma = [g] \in \calHC^{\mathrm{irr}} (X,\Sigma)$. 
Let $B_{1}, \dots, B_{m}$ denote the holes of $g$. 
For $1 \leq i \leq m$, define an immersion $f_{i} \colon S^{2} \to \bbH^{3}$ 
as the restriction of the developing map $\dev_{g}$ to a lift of $\partial B_{i}$. 
Fix the normal orientation of $f_{i} (S^{2})$ towards the interior of $B_{i}$. 
We define the volume of the holed cone structure $\sigma$ as 
$\vol (\sigma) = \vol (g) + \sum_{i=1}^{m} V(f_{i})$, 
where $\vol (g)$ is the volume of the holed cone metric $g$ as a Riemannian metric. 
Let $g^{\prime} \in \widetilde{\calHC}^{\mathrm{irr}} (X,\Sigma)$ be a restriction of $g$ 
with holes $B^{\prime}_{1}, \dots, B^{\prime}_{m^{\prime}}$. 
Let $f^{\prime}_{j}$ be immersions for $B^{\prime}_{j}$ as above. 
Then $\vol (g) - \vol (g^{\prime}) = \sum_{j=1}^{m^{\prime}} V(f^{\prime}_{j}) - \sum_{i=1}^{m} V(f_{i})$ 
by the definition of the signed enclosed volume. 
Hence $\vol (\sigma)$ is well-defined.

Let $\rho \colon \Gamma = \pi_{1} (X \setminus \Sigma) \to \mathrm{PSL}(2, \bbC)$ 
be a representation. 
Due to Dunfield~\cite{Dunfield99}, 
the volume of the representation $\rho$ is defined as follows. 
Let $\widetilde{M} \to M = X \setminus \Sigma$ denote the universal covering. 
Let $\widehat{M}$ and $\widehat{\widetilde{M}}$ respectively denote 
the compactifications of $M$ and $\widetilde{M}$ 
such that each end is compactified by adding one point. 
The added points are called ideal points. 
Let $\overline{\bbH^{3}} = \bbH^{3} \cup \partial \bbH^{3}$ 
denote the natural compactification of $\bbH^{3}$ 
by adding the sphere at infinity.

The singular locus $\Sigma_{i}$ corresponds to an ideal point $v_{i}$ of $\widehat{M}$. 
We fix a product structure of a neighborhood $N_{i}$ of $v_{i}$ 
which is obtained from $T_{i} \times [0, \infty]$ 
by collapsing $T_{i} \times \{ \infty \}$ to $v_{i}$. 
We lift this product structure to the ends of $\widetilde{M}$. 
Then an ideal point $\tilde{v}$ of $\widehat{\widetilde{M}}$ 
has a neighborhood 
$\widetilde{N}_{\tilde{v}} = 
P_{\tilde{v}} \times [0, \infty] /  (P_{\tilde{v}} \times \{ \infty \})$, 
where $P_{\tilde{v}}$ covers a torus $T_{i}$ for some $i$. 
For a set $A$, 
let $C = A \times [c, \infty] / (A \times \{ \infty \})$ 
by collapsing $A \times \{ \infty \}$ to a point $\infty$. 
A map $f \colon C \to \overline{\bbH^{3}}$ is a \emph{cone map} if 
\begin{itemize}
\item $f(C) \cap \partial \bbH^{3} = \{ f(\infty) \}$, and 
\item for any $a \in A$, the map $f|_{a \times [c, \infty]}$ 
         is the geodesic ray from $f(a, c)$ to $f(\infty)$ 
         parametrized by the arc length. 
\end{itemize}
A \emph{pseudo-developing map} for $\rho$ 
is a piecewise smooth map $D_{\rho} \colon \widetilde{M} \to \bbH^{3}$ 
satisfying the following conditions: 
\begin{itemize}
\item $D_{\rho}$ is equivariant for the $\Gamma$-actions, 
         which are the deck transformation on $\widetilde{M}$ 
         and the action on $\bbH^{3}$ via $\rho$. 
\item $D_{\rho}$ extends continuously to a map 
         $\widehat{D}_{\rho} \colon \widehat{\widetilde{M}} \to \overline{\bbH^{3}}$. 
\item There exists $c \geq 0$ such that 
         the restriction of $\widehat{D}_{\rho}$ to each end 
         $P_{\tilde{v}} \times [c, \infty] /  (P_{\tilde{v}} \times \{ \infty \})$
         is a cone map. 
\end{itemize}

Let $\tilde{v}_{i}$ an ideal point of $\widehat{\widetilde{M}}$ 
which is a lift of $v_{i} \in \widetilde{M}$. 
Recall that $\mu_{i}, \lambda_{i} \in \Gamma$ 
are commuting elements corresponding to the meridian and the longitude for $\Sigma_{i}$. 
The stabilizer $\Stab (\tilde{v}_{i}) < \Gamma$ of $\tilde{v}_{i}$ 
is generated by conjugates of $\mu_{i}$ and $\lambda_{i}$. 
The fixed point set $\Fix (\rho (\Stab (\tilde{v}_{i}))) \subset \partial \bbH^{3}$ 
consists of one or two points 
depending on whether the cone angle at $\Sigma_{i}$ is positive. 
If it is positive, 
$\Fix (\rho (\Stab (\tilde{v}_{i})))$ consists of the endpoints of a lift of $\Sigma_{i}$. 
Then $\widehat{D}_{\rho} (\tilde{v}_{i})$ is contained in $\Fix (\rho (\Stab (\tilde{v}_{i})))$.

Let $\omega$ denote the volume form of $\bbH^{3}$. 
Since the 3-form $D_{\rho}^{*} \omega$ on $\widetilde{M}$ is equivariant, 
it projects to a 3-form (also denoted by $D_{\rho}^{*} \omega$) on $M$. 
Define the volume of the representation $\rho$ 
as $\vol (\rho) = \vol (D_{\rho}) = \int_{M} D_{\rho}^{*} \omega$. 
Francaviglia~\cite{Francaviglia04} proved that 
$\vol (\rho)$ does not depend on the choice of a pseudo-developing map. 
We refer Kim~\cite{Kim15} 
for various equivalent definitions of the volume of a representation. 
We show that the volume of a holed cone structure 
coincides with the volume of the holonomy representation. 
The assertion for cone-manifolds was shown by Porti~\cite{Porti98}. 

\begin{thm}
\label{thm:vol}
Let $\sigma \in \calHC^{\mathrm{irr}} (X,\Sigma)$. 
Then $\vol (\sigma) = \vol (\hol (\sigma))$. 
\end{thm}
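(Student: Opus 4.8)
The plan is to express both sides of the equation $\vol(\sigma) = \vol(\hol(\sigma))$ as integrals over the same space $M = X \setminus \Sigma$ and then argue the integrands are cohomologous, so the integrals agree. The key idea is that a holed cone metric $g$ already supplies a genuine developing map $\dev_g$ on $X \setminus (\Sigma \cup \inter(B))$, while the volume of the representation is computed via a pseudo-developing map $D_\rho$. I would first choose a representative holed cone metric $g$ with holes $B_1, \dots, B_m$ for the structure $\sigma$, so that by definition $\vol(\sigma) = \vol(g) + \sum_{i=1}^m V(f_i)$, where each $f_i$ is $\dev_g$ restricted to a lift of $\partial B_i$.

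First I would build a single map $\Phi \colon \widetilde{M} \to \bbH^3$ that agrees with the (honest) developing map $\dev_g$ on the preimage of $X \setminus (\Sigma \cup \inter(B))$ and that fills in each hole $B_i$ by an equivariant smooth extension of $\dev_g$ over a lift of $B_i$. Such extensions exist because $\bbH^3$ is contractible, exactly as in the hypothesis of Lemma~\ref{lem:deg}; and near the cusps/cone loci I would arrange $\Phi$ to satisfy the cone-map condition so that it becomes a legitimate pseudo-developing map $D_\rho$ for $\rho = \rho_g$. Because $\Phi$ is equivariant, $\Phi^*\omega$ descends to a $3$-form on $M$, and by Francaviglia's invariance result $\int_M \Phi^*\omega = \vol(\rho) = \vol(\hol(\sigma))$.

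Next I would split this integral over $M$ along the holes. On the part of $M$ outside the holes, $\Phi = \dev_g$ is a local isometry, so $\int \Phi^*\omega = \vol(g)$. On the part inside a hole $B_i$, the map $\Phi$ restricted to a lift of $B_i$ is precisely a smooth extension of the boundary immersion $f_i$ over the $3$-ball, so Lemma~\ref{lem:deg} gives $\int_{B_i} \Phi^*\omega = V(f_i)$, with the sign matching because the normal orientation into $B_i$ is the inward orientation used in that lemma. Summing,
\[
\vol(\hol(\sigma)) = \int_M \Phi^*\omega = \vol(g) + \sum_{i=1}^m V(f_i) = \vol(\sigma),
\]
which is the claim. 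The well-definedness (independence of the representative $g$) is already secured by the equivalence-invariance of $\vol(\sigma)$ noted just before the theorem, together with Francaviglia's invariance for the pseudo-developing map.

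The main obstacle I anticipate is the cusp/cone-locus bookkeeping: I must verify that the extension $\Phi$ can be taken to be a bona fide pseudo-developing map, i.e.\ that near each ideal point $v_i$ it can be homotoped to a cone map with apex $\widehat{D}_\rho(\tilde v_i) \in \Fix(\rho(\Stab(\tilde v_i)))$ without changing the integral, and that the resulting improper integrals converge. For positive cone angles the metric $g$ is itself already a cone metric near $\Sigma_i$, so $\dev_g$ sends a neighborhood of $\Sigma_i$ isometrically to a neighborhood of a geodesic and the required cone structure is automatic; the delicate case is the cusp (cone angle zero), where I would need to check that the horospherical ends of $g$ match the cone-map condition after a compactly-supported modification that leaves $\Phi^*\omega$ integrable and its integral unchanged. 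Apart from this boundary analysis, every interior step reduces to Lemma~\ref{lem:deg} and the cited invariance of $\vol(\rho)$.
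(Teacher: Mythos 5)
Your overall strategy is the same as the paper's (build a pseudo-developing map out of $\dev_{g}$ plus hole-fillings, invoke Lemma~\ref{lem:deg} and Francaviglia's invariance), but there is a genuine gap in your treatment of the ends, and you have the delicate case exactly backwards. For a cone locus $\Sigma_{i}$ with \emph{positive} cone angle, the map $\Phi$ you describe --- equal to $\dev_{g}$ on the whole preimage of $X \setminus (\Sigma \cup \inter (B))$ --- is \emph{not} a pseudo-developing map, and no "automatic" verification can make it one. Indeed, as points of $\widetilde{M}$ approach the end corresponding to $\Sigma_{i}$, their images under $\dev_{g}$ accumulate on the entire geodesic axis $\ell_{i}$, not on a single point, so $\dev_{g}$ does not even extend continuously to the ideal point $\tilde{v}_{i}$ of $\widehat{\widetilde{M}}$. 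Moreover, the cone-map condition forces the apex $\widehat{D}_{\rho}(\tilde{v}_{i})$ to lie in $\Fix (\rho (\Stab (\tilde{v}_{i}))) \subset \partial \bbH^{3}$, which for positive cone angle is the pair of \emph{endpoints at infinity} of $\ell_{i}$, not the axis itself. So the cone-map condition fails structurally near positive-angle cone loci, and Francaviglia/Dunfield invariance cannot be applied to your $\Phi$. (The cusp case, which you single out as delicate, is the automatic one: there $\dev_{g}$ maps the end into a horoball and the rays orthogonal to horotori already converge to the parabolic fixed point, so with a suitable product structure $\dev_{g}$ restricted to the end is a cone map.)

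The missing idea is the paper's replacement step: on a standard tube $N(\Sigma_{i})$ one discards $\dev_{g}$ and instead uses the equivariant cone map from the one-point compactification $\widehat{\widetilde{N}}_{i}$ obtained by joining each point of the lift of $\partial N(\Sigma_{i})$ by a geodesic ray to a chosen point of $\Fix (\rho (\Stab (\tilde{v}_{i})))$, i.e.\ to an endpoint of the axis on $\partial \bbH^{3}$. This modification changes the integrand on $N(\Sigma_{i})$, so one must also check that the volume contribution is unchanged; the paper does this by observing that the cone map descends to a self-homeomorphism of $N(\Sigma_{i}) \setminus \Sigma_{i}$ (a degree-one map of the tube), whence $\int_{N(\Sigma_{i})} D_{\rho}^{*} \omega = \vol_{g} (N(\Sigma_{i}))$. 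Without this replacement and the accompanying degree (or Stokes-type) argument, your identity $\int_{M} \Phi^{*} \omega = \vol (g) + \sum_{i} V(f_{i})$ is asserted for a map to which the definition of $\vol(\rho)$ does not apply, so the proof as written does not go through whenever some cone angle is positive.
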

\begin{proof}
Take $g \in \widetilde{\calHC}^{\mathrm{irr}} (X,\Sigma)$ such that $[g] = \sigma$. 
Let $\rho = \hol (\sigma)$. 
For simplicity, we assume that the hole $B$ of $g$ is a single ball. 
Let $f$ be the restriction of $\dev_{g}$ to a lift of $\partial B$. 
We extend $f$ to $F \colon B \to \bbH^{3}$ as in Lemma~\ref{lem:deg}. 

Let $N(\Sigma_{i})$ be a regular neighborhood of $\Sigma_{i}$ 
which is a standard tube or cusp with respect to $g$. 
Let $\widehat{\widetilde{N}}_{i}$ denote the one-point compactification 
of the universal cover of $N(\Sigma_{i}) \setminus \Sigma_{i}$. 
The restriction $\dev_{g}$ to the preimage of $\partial N(\Sigma_{i})$ 
extends to a cone map from $\widehat{\widetilde{N}}_{i}$, 
obtained by joining each point of the boundary to a point in $\Fix (\rho (\Stab (\tilde{v}_{i})))$. 
This cone map is equivariant and 
projected to a self-homeomorphism on $N(\Sigma_{i}) \setminus \Sigma_{i}$. 

We construct a pseudo-developing map $D_{\rho}$ 
by combining the restriction $\dev_{g}$ 
to the preimage of $X \setminus (\bigcup_{i} N(\Sigma_{i}) \cup B)$, 
the cone maps from $\widehat{\widetilde{N}}_{i}$, and copies of $F$. 
Then we have $\int_{X \setminus B} D_{\rho}^{*} \omega = \vol (g)$ 
and $\int_{B} D_{\rho}^{*} \omega = V(f)$ by Lemma~\ref{lem:deg}. 
Hence $\vol (\rho) = \int_{X \setminus \Sigma} D_{\rho}^{*} \omega = \vol (g) + V(f) = \vol (\sigma)$. 
\end{proof}

The Schl\"{a}fli formula for holed cone structures is expressed as follows. 
This also follows from Hodgson's form for representations \cite{Hodgson86}. 

\begin{prop}
Let $\sigma_{t} \in \calHC^{\mathrm{irr}} (X,\Sigma)$ 
be a continuous family for $-\epsilon < t < \epsilon$. 
Then 
\[
\dfrac{d}{dt} \vol (\sigma_{t}) = 
-\dfrac{1}{2} \sum_{i=1}^{n} \ell_{i}(t) \dfrac{d}{dt} \theta_{i}(t), 
\] 
where $\ell_{i}(t)$ is the length of $\Sigma_{i}$ for $\sigma_{t}$ 
and $\theta_{i}(t)$ is the cone angle at $\Sigma_{i}$ for $\sigma_{t}$. 
\end{prop}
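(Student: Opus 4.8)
The plan is to reduce the statement to the classical Schl\"{a}fli formula for hyperbolic cone-manifolds by exploiting Theorem~\ref{thm:vol}, which identifies $\vol(\sigma_t)$ with $\vol(\hol(\sigma_t))$, the volume of the holonomy representation. Since the volume of a holed cone structure equals the volume of its holonomy representation, and the holes contribute nothing to the geometry near the cone loci $\Sigma_i$, the derivative of the volume should only see the geometry of the regular neighborhoods $N(\Sigma_i)$, where the holed cone metric agrees with an honest cone metric. First I would fix a continuous family $g_t$ of holed cone metrics representing $\sigma_t$, choose the holes $B$ away from the cone loci, and construct (as in the proof of Theorem~\ref{thm:vol}) a continuous family of pseudo-developing maps $D_{\rho_t}$ by gluing the restriction of $\dev_{g_t}$ on the complement of the tubes and holes to the standard cone maps on $\widehat{\widetilde{N}}_i$ and to the extensions $F$ over the holes.

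The key computation is then to differentiate $\vol(\rho_t)=\int_M D_{\rho_t}^{*}\omega$ in $t$. The standard approach is to write the derivative as an integral of a Lie-derivative term, which by Cartan's formula and Stokes' theorem localizes to boundary contributions. Because $\omega$ is closed and the deformation vector field is defined on the interior, the interior integral vanishes, and the only surviving terms come from the tubular boundaries $\partial N(\Sigma_i)$ near the cone loci; the contributions across the hole boundaries $\partial B$ cancel between the two sides of the immersed sphere (the signed enclosed volume $V(f)$ was designed precisely so that its variation matches the variation of the ambient metric across the hole). On each tube boundary the integral reduces to the classical Schl\"{a}fli contribution. I would invoke Hodgson's differential-form version of the Schl\"{a}fli identity for representations \cite{hodgson1986degeneration}, as the statement itself suggests, to evaluate each boundary term as $-\tfrac12\,\ell_i(t)\,\tfrac{d}{dt}\theta_i(t)$, where $\ell_i$ and $\theta_i$ are the length and cone angle at $\Sigma_i$.

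The main obstacle I anticipate is justifying the localization to the cone loci, i.e.\ showing rigorously that neither the holes nor the smooth interior produce any net contribution to $\tfrac{d}{dt}\vol$. For the smooth part this is the usual fact that the hyperbolic volume form is variation-exact away from singularities, but one must be careful that the immersed (non-embedded) nature of $\dev_{g_t}$ and of the hole boundaries does not introduce hidden terms; here the signed enclosed volume and Lemma~\ref{lem:deg} are exactly the tools that let the hole terms be absorbed, since $V(f)$ is the integral of the pulled-back volume form over the filling $F$. Once one knows $\vol(\sigma_t)=\vol(\rho_t)$ and that $\rho_t$ is a smooth path in $\widehat{\calX}^{\mathrm{cone}}(X,\Sigma)$ agreeing near each $\Sigma_i$ with a loxodromic-plus-rotation holonomy of length $\ell_i$ and rotation angle $\theta_i$, the result is immediate from the representation-theoretic Schl\"{a}fli formula. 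Thus the honest work is in setting up the pseudo-developing family continuously in $t$ and differentiating under the integral sign, after which I would simply cite \cite{hodgson1986degeneration} for the boundary evaluation and conclude.
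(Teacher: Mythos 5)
Your argument is correct in substance, but it takes a genuinely different route from the paper's own proof: you execute the alternative that the paper only mentions in its preceding remark, namely deducing the formula from Hodgson's variational formula for representations \cite{hodgson1986degeneration} after identifying $\vol(\sigma_{t})$ with the representation volume $\vol(\rho_{t})$ via Theorem~\ref{thm:vol}. The paper's proof never passes to representation volumes. Instead it chooses representatives $g_{t}$ with a single hole $B$ and, for small $\epsilon$, arranges that the developing images of the lifts of $\partial B$ move only by isometries of $\bbH^{3}$; hence the enclosed-volume terms are constant in $t$ and $\frac{d}{dt}\vol(\sigma_{t})=\frac{d}{dt}\vol(g_{t})$. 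It then triangulates $X\setminus \inter(B)$ with $\Sigma$ in the $1$-skeleton, taking the simplices adjacent to $\partial B$ metrically constant in $t$ and all others geodesic, and sums the classical polyhedral Schl\"afli formula (as in \cite{porti1998regenerating}) over the tetrahedra; the terms at interior edges cancel because the angle sums there are constantly $2\pi$, leaving exactly $-\frac{1}{2}\sum_{i}\ell_{i}(t)\,\frac{d}{dt}\theta_{i}(t)$. The trade-off: the paper's route is elementary and self-contained (no pseudo-developing families, no differentiation of an equivariant integral), while yours is shorter modulo the cited black box but must confront the equivariance problem --- the variation vector field of $D_{\rho_{t}}$ is not $\rho_{t}$-equivariant when the representation itself moves, so ``Cartan plus Stokes'' does not localize naively; this is precisely the difficulty that Hodgson's theorem encapsulates, so citing it is legitimate but is where all the analytic content hides. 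Also, your worry about cancellation ``across the hole boundaries'' is moot in the representation picture: the pseudo-developing map fills the holes, so $B$ contributes the interior term $\int_{B}D_{\rho_{t}}^{*}\omega=V(f_{t})$ (Lemma~\ref{lem:deg}) rather than a boundary term.
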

\begin{proof}
There is a continuous family 
$g_{t} \in \widetilde{\calHC}^{\mathrm{irr}} (X,\Sigma)$ for $-\epsilon < t < \epsilon$ 
with a single hole $B$ 
such that $\sigma_{t} = [g_{t}]$. 
By taking sufficiently small $\epsilon$, 
we may assume that 
the restrictions of $\dev_{g_{t}}$ to each lift of $\partial B$ 
are deformed by isometries of $\bbH^{3}$. 
Then $\dfrac{d}{dt} \vol (\sigma_{t}) = \dfrac{d}{dt} \vol (g_{t})$.

Take a triangulation of $X \setminus \inter (B)$ 
so that $\Sigma$ is contained in the 1-skeleton. 
We may assume that 
the simplices adjacent to $\partial B$ have constant metrics for $t$, 
and the other simplices are geodesic. 
The formula for a hyperbolic tetrahedron 
is expressed by the lengths of edges and dihedral angles 
in the same form as the assertion (see \cite{Porti98}). 
The assertion follows by taking the sum of terms for the tetrahedra. 
Since the sum of dihedral angles about each internal edge is $2\pi$, 
the corresponding terms in the formula cancel. 
\end{proof}

\section{Example}
\label{section:ex}

Let $L = L_{1} \sqcup \dots \sqcup L_{4}$ be a link in $X = T^{2} \times I$ 
as indicated on the left of Figure~\ref{fig: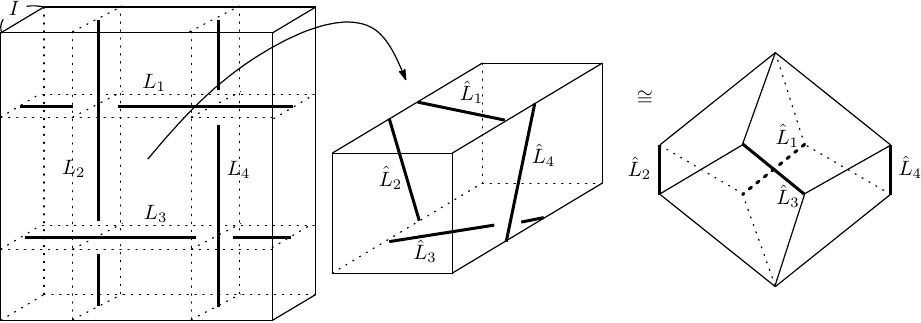}, 
where $I$ is an open interval. 
This is a quotient of the ``plain weave.'' 
The author~\cite{Yoshida22} described hyperbolic cone structures on $(X,L)$, 
and gave an example of their degeneration with decreasing cone angles. 
Cone loci meet in this degeneration. 
Nonetheless, holed cone structures enable us to avoid such degeneration.

\fig[width=12cm]{hcs-decomp.pdf}{Decomposition of $(X,L)$ into trapezohedra}

The space $(X,L)$ is decomposed into four polyhedra, called \emph{(tetragonal) trapezohedra}, 
as indicated in Figure~\ref{fig:hcs-decomp.pdf}. 
Conversely, we can construct a hyperbolic cone structure on $(X,L)$ 
by gluing four copies of a hyperbolic trapezohedron. 
For each $i= 1, \dots , 4$, the cone locus $L_{i}$ derives from the edge $\hat{L}_{i}$. 
Suppose that the dihedral angle at $\hat{L}_{i}$ is $\alpha_{i}$. 
The dihedral angle at any other edge needs to be a right angle. 
If $\alpha_{i} = 0$, then $\hat{L}_{i}$ degenerates to an ideal vertex. 
Nonetheless, we continue to call it a trapezohedron. 
Then the cone angle at $L_{i}$ is $2\alpha_{i}$. 
It is not known whether every cone structure on $(X,L)$ 
is obtained by this construction. 
However, this is true if the global rigidity for the cone structures on $(X,L)$ holds.

Extending this, 
we construct a holed hyperbolic cone metric on $(X,L)$ by gluing ``holed trapezohedra.'' 
As a holed trapezohedron, 
we consider the complement of holes in a trapezohedron 
endowed with a hyperbolic metric 
such that 
\begin{itemize}
\item each hole is disjoint from the edges $\hat{L}_{i}$ and the vertices, 
\item each face is totally geodesic, and 
\item the boundary of holes is orthogonal to faces. 
\end{itemize}
A holed trapezohedron is isometrically immersed in $\bbH^{3}$ 
as indicated in Figure~\ref{fig: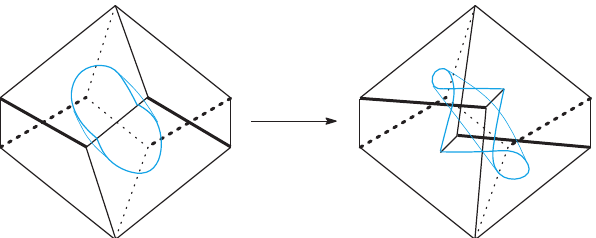}. 
The boundary of a hole is drawn in blue. 

\fig[width=12cm]{hcs-holed.pdf}{A holed trapezohedron}

Let us consider a holed trapezohedron 
such that the cone angles at the edges other than $\hat{L}_{i}$ 
are right angles. 
We can glue four copies of such a holed trapezohedron 
in the same manner as for non-holed ones. 
Then we obtain a holed cone metric on $(X,L)$.

According to the description in \cite{Yoshida22}, 
let 
\begin{align*}
\mathcal{B} &= 
\{(q_{1}, \dots , q_{4},t) \in \mathbb{R}_{>0}^{4} \times \mathbb{R}_{\geq 0} 
\mid \prod_{i=1}^{4} q_{i} = 1, \
t \geq \frac{1}{2}(q_{i} - q_{i}^{-1})\}, \\
\mathcal{B}_{0} &= \{(q_{1}, \dots , q_{4},t) \in \mathcal{B} 
\mid (1-q_{i}q_{i+1}) t < q_{i} + q_{i+1} \} 
\end{align*}
with the indices $i = 1, \dots, 4$ modulo 4. 
Define $f \colon \mathcal{B} \to \mathbb{R}^{4}$ by 
\[
f(q_{1}, \dots , q_{4},t) = 
\left( \frac{q_{1}-t}{\sqrt{1+t^{2}}}, \dots , \frac{q_{4}-t}{\sqrt{1+t^{2}}} \right). 
\]
Then $f$ is a homeomorphism onto $(-1,1]^{4}$. 
There exists a trapezohedron such that 
the dihedral angle at $\hat{L}_{i}$ is $\alpha_{i}$ 
and the dihedral angle at any other edge is the right angle 
if and only if $(\cos \alpha_{1}, \dots , \cos \alpha_{4}) \in f(\mathcal{B}_{0})$. 

We show that every element in $\mathcal{B}$ corresponds to a holed trapezohedron. 
Because the construction is consistent with that in \cite{Yoshida22}, 
we briefly sketch it. 

\begin{thm}
\label{thm:htrap}
For any $(\alpha_{1}, \dots, \alpha_{4}) \in [0, \pi)^{4}$, 
there exists a holed trapezohedron such that 
the cone angle at $\hat{L}_{i}$ is $\alpha_{i}$ 
and the cone angle at any other edge is a right angle. 
\end{thm}
\begin{proof}
We use the upper half-space model of $\bbH^{3}$. 
Regard $\partial \bbH^{3} = \bbR^{2} \cup \{\infty\}$. 
Let $(q_{1}, \dots , q_{4},t)$ denote the element of $\mathcal{B}$ 
satisfying that $\dfrac{q_{i}-t}{\sqrt{1+t^{2}}} = \cos \alpha_{i}$ for $i = 1, \dots, 4$. 
Suppose that $p_{i} > 0$ satisfy $q_{i} = \dfrac{p_{i+1}}{p_{i}}$. 
Fix the following points in $\bbR^{2}$: 
\begin{align*}
P_{1} &= (p_{1},p_{2}), P_{2} = (-p_{3},p_{2}), 
P_{3} = (-p_{3},-p_{4}), P_{4} = (p_{1},-p_{4}), \\
R_{1} &= (p_{1},tp_{1}), R_{2} = (-tp_{2},p_{2}), 
R_{3} = (-p_{3},-tp_{3}), R_{4} = (tp_{4},-p_{4}). 
\end{align*}
Let $C_{i}$ denote the circle in $\bbR^{2}$ with the center $R_{i}$ which contains $O = (0,0)$. 
Let $S_{i}$ denote the intersectional point of $C_{i}$ and $C_{i+1}$ other than $O$. 
Let $Q_{i}$ denote the intersectional point of the lines $O S_{i}$ and $P_{i} P_{i+1}$. 
The point $Q_{i}$ may not be contained in the interior of the segment $P_{i} P_{i+1}$.

Let $\widetilde{C}_{i}$ denote the totally geodesic plane in $\bbH^{3}$ bounded by $C_{i}$. 
Let $\widetilde{P}_{i} = \infty P_{i} \cap \widetilde{C}_{i}, 
\widetilde{Q}_{i} = \infty Q_{i} \cap \widetilde{C}_{i} \in \bbH^{3}$. 
We construct a desired holed trapezohedron $T$ as follows. 
Figure~\ref{fig: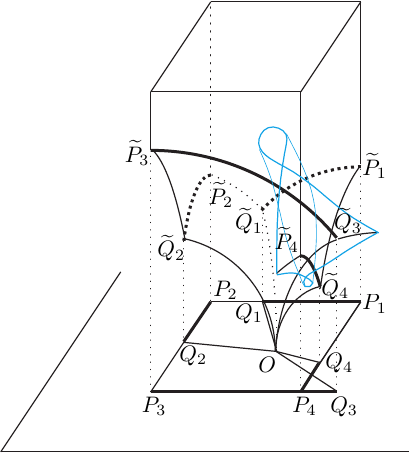} indicates the projection of $T$ to $\bbR^{2}$, 
where a neighborhood of the point $\infty$ is truncated. 
The vertices of $T$ are the points $O, \infty, \widetilde{P}_{i}$, and $\widetilde{Q}_{i}$. 
The edges of $T$ are the geodesic segments $\infty \widetilde{P}_{i}, O \widetilde{Q}_{i}$, $\widetilde{P}_{i} \widetilde{Q}_{i} = \hat{L}_{i}$, and the possibly holed segments between $\widetilde{Q}_{i}$ and $\widetilde{P}_{i+1}$. 
Suppose that $Q_{i}$ is not contained in the interior of the segment $P_{i}P_{i+1}$. 
In this case, the segments $O Q_{i}$ and $P_{i+1} Q_{i+1}$ intersect. 
We obtain a holed trapezohedron $T$ 
by making a hole between $\widetilde{Q}_{i}$ and $\widetilde{P}_{i+1}$ drawn in blue. 
The faces $\infty \widetilde{P}_{i} \widetilde{Q}_{i} \widetilde{P}_{i+1}$ and $O \widetilde{Q}_{i+1} \widetilde{P}_{i+1} \widetilde{Q}_{i}$ are ``holed polygons.'' 
If two points $Q_{i}$ and $Q_{i+1}$ are out of the rectangle $P_{1} P_{2} P_{3} P_{4}$, 
it is sufficient to make two holes. 
Then $T$ has the dihedral angle $\alpha_{i}$ at $\widetilde{P}_{i} \widetilde{Q}_{i} = \hat{L}_{i}$ 
and the right dihedral angle at any other edge 
in the same manner as \cite{Yoshida22}. 
\end{proof}

\fig[width=8cm]{hcs-hproj.pdf}{Projection of a holed trapezohedron}

Let $\calHC_{\mathrm{sym}} (X,L)$ denote the space of holed cone structures obtained by gluing four copies of a holed trapezohedron as above. 
Let $\calC_{\mathrm{sym}} (X,L) \subset \calC (X,L) \cap \calHC_{\mathrm{sym}} (X,L)$ 
denote the space of cone structures obtained by gluing four copies of a non-holed trapezohedron as above. 
The space $\calC_{\mathrm{sym}} (X,L)$ is the component of $\calC (X,L)$ containing the hyperbolic structure by \cite[Corollary 3.12]{Yoshida22}.
Let $\Theta \colon \calHC (X,L) \to [0, \infty)^{4}$ denote the map assigning the cone angles. 
The image $\Theta (\calHC_{\mathrm{sym}} (X,L))$ is $[0,2\pi)^{4}$ by Theorem~\ref{thm:htrap}. 
The image $\Theta (\calC_{\mathrm{sym}} (X,L)) \subset [0,2\pi)^{4}$ was explicitly described by \cite[Theorem 3.2]{Yoshida22}. 
Since a holed trapezohedron for fixed dihedral angles is unique up to holes, 
the map $\Theta \colon \calHC_{\mathrm{sym}} (X,L) \to [0,2\pi)^{4}$ is injective. 
Moreover, holed trapezohedra can be constructed to depend continuously on the dihedral angles $\alpha_{i}$. 
Hence we obtain the following corollary.

\begin{cor}
There exists a subspace $\calHC_{\mathrm{sym}} (X,L)$ of $\calHC (X,L)$ 
containing \\ $\calC_{\mathrm{sym}} (X,L)$ 
such that  the map $\Theta \colon \calHC_{\mathrm{sym}} (X,L) \to [0,2\pi)^{4}$ 
assigning the cone angles is a homeomorphism. 
\end{cor}

\section*{Acknowledgements} 
The author is grateful to Hirotaka Akiyoshi, Tetsuya Ito, and the anonymous reviewers for their helpful comments. 
This work is supported by the World Premier International Research Center Initiative Program, International Institute for Sustainability with Knotted Chiral Meta Matter (WPI-SKCM$^2$), MEXT, Japan, 
JSPS KAKENHI Grant Numbers 19K14530, 
JST CREST Grant Number JPMJCR17J4, 
the Research Institute for Mathematical Sciences, an International Joint Usage/Research Center located in Kyoto University. 

\bibliographystyle{siam}
\bibliography{ref-hcs}

\end{document}